\newcommand\dd{\mathrm{d}}
\newcommand\ud{\,\mathrm{d}}
\newcommand{\Aa}{\mathcal{A}}
\newcommand{\Cc}{\mathcal{C}}
\newcommand{\Ii}{\mathcal{I}}
\newcommand{\Oo}{\mathcal{O}}
\newcommand{\Ss}{\mathcal{S}}
\newcommand{\Tt}{\mathcal{T}}
\newcommand{\Xx}{\mathcal{X}}
\newcommand{\fg}{\mathfrak{g}}
\newcommand{\fs}{\mathfrak{s}}
\newcommand{\EE}{\mathbb{E}}
\newcommand{\NN}{\mathbb{N}}
\newcommand{\PP}{\mathbb{P}}
\newcommand{\RR}{\mathbb{R}}
\newcommand\Tr{\mathrm{Tr}}
\newcommand{\inp}[2]{\langle #1,#2 \rangle}
\newcommand{\Nabla}{\nabla}
\newcommand{\Exp}{\mathrm{Exp}}
\newcommand{\ad}{\mathrm{ad}}
\newcommand{\Ad}{\mathrm{Ad}}
\renewcommand{\epsilon}{\varepsilon}
\theoremstyle{plain}
\newtheorem{theorem}{Theorem}[section]
\theoremstyle{remark}
\theoremstyle{plain}
\newtheorem{lemma}[theorem]{Lemma}
\newtheorem{proposition}[theorem]{Proposition}
\newtheorem{definition}[theorem]{Definition}
\numberwithin{equation}{section}
\begin{document}

\title{Large deviations for random walks on Lie groups}

\author{
\renewcommand{\thefootnote}{\arabic{footnote}}
Rik Versendaal\footnotemark[1]
}

\footnotetext[1]{
Delft Institute of Applied Mathematics, Delft University of 
Technology, P.O. Box 5031, 2600 GA Delft, The Netherlands, E-mail: \texttt{R.Versendaal@tudelft.nl}.
}

\date\today

\maketitle

\begin{abstract}
We study large deviations for random walks on Lie groups defined by $\sigma_n^n = \exp(\frac1nX_1)\cdots\exp(\frac1nX_n)$, where $\{X_n\}_{n\geq1}$ is an i.i.d sequence of bounded random variables in the Lie algebra $\fg$. We follow a similar approach as in the proof of large deviations for geodesic random walks as given in \cite{Ver19}. This approach makes it possible to simply rescale the increments of the random walk, without having to resort to dilations in order to reduce the influence of higher order commutators. Finally, we will apply this large deviation result to the Lie group of stochastic matrices. \\

\textit{Keywords:} large deviations, random walks, Lie groups, Lie algebras, Cram\'er's theorem, Baker-Campbell-Hausdorff formula, products of random matrices, stochastic group
\end{abstract}

\tableofcontents

\section{Introduction}

Since the middle of the previous century, the study of random matrices has gotten a lot of attention. Of particular interest is the limiting behvaviour of products of random matrices. Products of random matrices find their applications for example in the study of wireless telecommunication (see e.g. \cite{TV04}), where a matrix is used to map an input signal to an output signal. The randomness then comes from possible noise disturbing the signal. Another application can be found in studying solutions to difference equations. One can for example think about the Schrodinger equation on a one-dimensional latice with random vector potentionals, see e.g. \cite{BL85}.  

The limiting behaviour of products of random matrices was first studied in \cite{Bel54} and further developed by (among others) \cite{FK60}. In these works, one takes a sequence of matrix valued random variables and studies the product
$$
\Ss_n = M_1\cdots M_n.
$$
In order to say anything about the limiting behaviour of the random variable $\Ss_n$, we take a matrix norm and consider the sequence of real-valued random variables given by $\log||\Ss_n||$. It is then shown that under mild conditions we have 
$$
\lim_{n\to\infty} \frac1n\log||\Ss_n|| = \gamma
$$ 
almost surely, which is the analogue of the law of large numbers. The constant $\gamma$ is referred to as the upper Lyapunov exponent. Furthermore, in \cite{Pag82} (see also \cite{BL85}) it is shown that under additional assumptions, $\log||\Ss_n||$ also satisfies the central limit theorem, in that
$$
\frac{\log||\Ss_n|| - n\gamma}{\sqrt n}
$$
converges in distribution to a Gaussian random variable. Additionally, the same work also verifies the large deviation properties of the sequence $\log||\Ss_nx||$ of random variables, where $x$ is some vector.\\

It is possible to go beyond matrix groups, and study products of elements of a general Lie group. For a sequence $g_1,g_2,\ldots$ in a Lie group $G$, using the group operation, we can define the product
$$
\Ss_n = g_1g_2\cdots g_n,
$$
and we will refer to this as a random walk in the Lie group G.

Now, in order to study limit theorems like the law of large numbers and central limit theorem, we can no longer use a matrix norm. Instead, we can equip $G$ with a left-invariant Riemannian metric $d$ and study the real-valued random variables $d(\Ss_n,e)$, where $e$ is the identity element of the group $G$. It is shown in \cite{Gui80} that if $G$ is locally compact, then there exists a $\gamma \geq 0$ such that almost surely 
$$
\lim_{n\to\infty} \frac1n d(\Ss_n,e) = \gamma.
$$

Furthemore, the central limit theorem, i.e., the convergence of
$$
\frac{d(\Ss_n,e) - \gamma n}{\sqrt{n}}
$$
in distribution to a normal distribution is studied in \cite{Tut65}.\\

Another approach to study limit theorems, which we will be considering here, is not to transfer the problem to a real-valued setting, but to find a suitable way of rescaling the random walk in the Lie group $G$ itself. For this, we slightly modify the definition of a random walk. Let $\fg$ denote the Lie algebra of $G$, and let $X_1,X_2,\ldots$ be a sequence in $\fg$. We then define the random walk in $G$ as
$$
\Ss_n = \exp(X_1)\cdots\exp(X_n),
$$ 
where $\exp:\fg \to G$ denotes the exponential map. Because $\fg$ is a vector space, we can rescale the sequence $X_1,X_2,\ldots$, allowing us to define the rescaled random walk by
$$
\sigma_n^n = \exp\left(\frac1nX_1\right)\cdots\exp\left(\frac1nX_n\right). 
$$
However, from the Baker-Campbell-Hausdorff formula it follows after a formal computation that
$$
\sigma_n^n = \exp\left(\frac1n\sum_{i=1}^n X_i + \Oo(1)\right),
$$
which one obtains by counting the number of commutators.
Consequently, it is not obvious how to use known results regarding the limiting behaviour of $\frac1n\sum_{i=1}^nX_i$ in order to study the limiting behaviour of $\sigma_n^n$. To overcome this problem, instead of simply rescaling the elements $\fg$ by $\frac1n$, one uses so called dilations $D_{\frac1n}:\fg \to \fg$ as done in \cite{Bre04,BC99,Gav77,Neu96}. The idea is to decompose an element $Y \in \fg$ as $Y = \sum_{i\geq1} Y_i$, where $Y_i$ is an $i$-th order commutator, meaning it is of the form $[Y_i^1,[\cdots,[Y_i^{i-1},Y_i^i]]]$, where none  of the $Y_i^j$ are commutators. We call a Lie algebra nilpotent, if there is some $l \in \NN$ such that all comutators of order $l$ are 0. In that case, $Y$ may be written as a finite sum $Y = \sum_{i=1}^l Y_i$ and we define the dilation $D_{\frac1n}Y$ of $Y$ by
$$
D_{\frac1n}Y = \sum_{i=1}^l \frac{1}{n^i}Y_i
$$
So essentially, we dilate the elements of $\fg$ in such a way that the problematic parts, being the (higher order) commutators, are scaled away in the limit by multiplying those by higher powers of $\frac1n$. Now the Baker-Campbell-Hausdorff formula will give us after a formal computation that
$$
\prod_{i=1}^n \exp\left(D_{\frac1n}X_i\right) = \exp\left(\frac1n\sum_{i=1}^n X_i + \Oo\left(\frac1n\right)\right),
$$
making it at least more viable that in the limit this product should indeed behave like $\exp\left(\frac1n\sum_{i=1}^n X_i\right)$. It is shown in \cite{Gav77,Neu96} that the law of large numbers is satisfied, i.e., if $X_1,X_2,\ldots$ are i.i.d. with $\EE(X_1) = 0$ and with finite moment generating function in a neighbourhood of the origin, then almost surely
$$
\lim_{n\to\infty} \prod_{i=1}^n \exp\left(D_{\frac1n}X_i\right) = 0.
$$
The large deviations for the sequence 
\begin{equation}\label{eq:sequence_scaled_away}
\left\{\prod_{i=1}^n \exp\left(D_{\frac1n}X_i\right)\right\}_{n\geq0}
\end{equation}
are studied in \cite{BC99}. The prove uses path-space, first transferring the problem to $\RR^d$ to use Mogulskii's theorem, followed up by the contraction principle to get the large deviations for the end-point of the random walk.\\ 

However, if $G$ admits a bi-invariant metric, the processes $\Ss_n$ and $\sigma_n^n$ are special cases of geodesic random walks as defined in \cite{Jor75}. The large deviations for these have been studied in \cite{Ver19,KRV18}. Consequently, if $G$ admits a bi-invariant metric, then the sequence $\{\sigma_n^n\}_{n\geq1}$ satisfies in $G$ the large deviation principle. Moreover, the corresponding rate function coincides with the rate function for the sequence of random variables in \eqref{eq:sequence_scaled_away}, where the higher order commutators are scaled away.

This raises the question whether the sequence $\{\sigma_n^n\}_{n\geq1}$ also satisfies a large deviation principle when $G$ does not necessarly admit a bi-invariant metric. Following the approach in \cite{Ver19}, we will show that under some assumptions, this is indeed the case. More precisely, we will prove that if $\{X_n\}_{n\geq1}$ is a sequence of bounded, i.i.d. $\fg$-valued random variables, with $\EE(X_1) = 0$ and everywhere finite moment generating function, then the sequence $\{\sigma_n^n\}_{n\geq0}$ satisfies in $G$ the large deviation principle with rate function $I$ given by
$$
I(g) = \inf\left\{\int_0^1 \Lambda^*(\dot\gamma(t))\ud t \middle| \gamma \in \Aa\Cc([0,1];G), \gamma(0) = e, \gamma(1) = g\right\}.
$$
Here, $\Lambda(\lambda) = \log\EE(e^{\inp{\lambda}{X_1}})$ denote the log moment generating function, and $\Lambda^*$ its Legendre transform given by
$$
\Lambda^*(X) := \sup_{\lambda \in \fg} \inp{\lambda}{X} - \Lambda(\lambda).
$$
\\

The paper is organised as follows. First, in Section \ref{section:notation} we make precise the notion of a large deviation principle for a sequence of random variables. Additionally, we introduce some theory on Lie groups on Lie algebras and fix the notation we use in what follows. With the notation fixed, we define in Section \ref{section:random_walks} the random walks in Lie groups we will be studying. In Section \ref{section:main_theorem} we state our main theorem and give a sketch of its proof. Additionally, we will also discuss an example by considering the stochastic group. Section \ref{section:estimates} is devoted to important estimates following from the Baker-Campbell-Hausdorff formula. Finally, we use these estimates to prove our main theorem in Section \ref{section:proof}.


\section{Notation and basic theory} \label{section:notation}

In this section we introduce some large deviation theory as well as the theory of Lie groups and Lie algebras. Additionally, we will fix the notation we will use in what follows.

\subsection{Large deviations}

Large deviation theory is concerned with the limiting behaviour on an exponential scale of a sequence $\{Z_n\}_{n\geq1}$ of random variables. This behaviour is determined by a so called rate function. More precisely, we have the following definition.

\begin{definition}
Let $\{Z_n\}_{n\geq1}$ be a sequence of random variables taking values in some metric space $\Xx$. 

\begin{enumerate}
\item A \emph{rate function} is a lower semicontinuous function $I:\Xx \to [0,\infty]$. If the level sets of $I$ are compact, then the rate function is called \emph{good}.
\item The sequence $\{Z_n\}_{n\geq1}$ satisfies the large deviation principle in $\Xx$ with rate function $I$ if the following hold:
\begin{enumerate}
\item (Upper bound) For any $F \subset \Xx$ closed we have
$$
\limsup_{n\to\infty} \frac1n\log\PP(Z_n \in F) \leq -\inf_{x \in F} I(x).
$$
\item (Lower bound) For any $G \subset \Xx$ open we have
$$
\liminf_{n\to\infty} \frac1n\log\PP(Z_n \in G) \geq -\inf_{x\in G} I(x).
$$
\end{enumerate}
\end{enumerate}
\end{definition}

It is often easier to prove the upper bound for compact sets, rather than general closed sets. If the lower bound holds, and the upper bound only holds for compact sets, we say the sequence $\{Z_n\}_{n\geq1}$ satisfies the \emph{weak large deviation principle}. If the mass of the random variables is then concentrated enough on compact sets, then the upper bound may actually be extended to all closed sets. More precisely, we say that the sequence $\{Z_n\}_{n\geq1}$ is \emph{exponentially tight} if for every $\alpha > 0$ there exists a compact set $K_\alpha \subset \Xx$ such that
$$
\limsup_{n\to\infty} \frac1n\log\PP(Z_n \in K_\alpha^c) < - \alpha.
$$

We have the following proposition, which can for example be found in \cite[Section 1.2]{DZ98}.

\begin{proposition}
Let $\{Z_n\}_{n\geq1}$ be a sequence of random variables satisfying the weak large deviation principle with rate function $I$. Assume furthermore that the sequence is exponentially tight. Then $\{Z_n\}_{n\geq1}$ satisfies the (full) large deviation principle with the same rate function $I$.
\end{proposition}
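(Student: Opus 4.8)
The plan is as follows. The weak large deviation principle already delivers the lower bound on open sets and the upper bound on compact sets, so the only thing left to establish for the full principle is the upper bound
$$
\limsup_{n\to\infty}\frac1n\log\PP(Z_n \in F) \leq -\inf_{x\in F}I(x)
$$
for an arbitrary \emph{closed} set $F \subseteq \Xx$. I would fix $\alpha > 0$, take the compact set $K_\alpha$ supplied by exponential tightness, and decompose
$$
\PP(Z_n \in F) \leq \PP(Z_n \in F\cap K_\alpha) + \PP(Z_n \in K_\alpha^c).
$$
Now $F \cap K_\alpha$ is a closed subset of the compact set $K_\alpha$, hence itself compact, so the compact-set upper bound from the weak principle applies and gives
$$
\limsup_{n\to\infty}\frac1n\log\PP(Z_n \in F\cap K_\alpha) \leq -\inf_{x\in F\cap K_\alpha}I(x) \leq -\inf_{x\in F}I(x),
$$
while exponential tightness gives $\limsup_{n\to\infty}\frac1n\log\PP(Z_n \in K_\alpha^c) < -\alpha$.

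To combine the two pieces I would invoke the elementary fact that for nonnegative sequences $(a_n)$ and $(b_n)$,
$$
\limsup_{n\to\infty}\frac1n\log(a_n + b_n) = \max\left(\limsup_{n\to\infty}\frac1n\log a_n,\ \limsup_{n\to\infty}\frac1n\log b_n\right),
$$
which is immediate from $\max(a_n,b_n)\leq a_n+b_n\leq 2\max(a_n,b_n)$ together with $\frac1n\log 2 \to 0$. This yields
$$
\limsup_{n\to\infty}\frac1n\log\PP(Z_n \in F) \leq \max\left(-\inf_{x\in F}I(x),\ -\alpha\right),
$$
and letting $\alpha\to\infty$ produces the required closed-set upper bound. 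Together with the open-set lower bound inherited from the weak principle, this is precisely the full large deviation principle with rate function $I$.

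I do not expect any genuine obstacle here: this is the standard device of upgrading a compact-set bound to a closed-set bound by absorbing the complement of a large compact set into the exponential tightness estimate. The only minor points of care are the convention that the right-hand side reads $-\infty$ (so the inequality is vacuous) when $F$ does not meet the effective domain of $I$, and the harmless loss of the factor $2$ under the normalisation $\frac1n\log(\cdot)$. As a side remark, the same ingredients also show that $I$ is automatically a good rate function: for $x\notin K_{\alpha+1}$ the lower bound on the open set $K_{\alpha+1}^c$ together with exponential tightness force $I(x)>\alpha+1$, so the closed level set $\{x : I(x)\leq\alpha\}$ is contained in the compact set $K_{\alpha+1}$ and is therefore compact.
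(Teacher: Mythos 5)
Your argument is correct and is essentially the standard proof: the paper itself does not prove this proposition but simply cites \cite[Section 1.2]{DZ98}, and your decomposition of $\PP(Z_n \in F)$ over $F \cap K_\alpha$ and $K_\alpha^c$, combined with the elementary $\limsup\frac1n\log(a_n+b_n)=\max(\cdot,\cdot)$ lemma and $\alpha\to\infty$, is exactly the argument given there. Your side remark that exponential tightness together with the open-set lower bound forces the level sets of $I$ to be compact is also correct and is a standard companion observation.
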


\subsection{Lie groups and Lie algebras} \label{section:Lie_groups}

In this section we collect the necessary notation and theory on Lie groups and Lie algebras. For more details, we refer to \cite{Lee03,War83}  for general Lie group theory, and to \cite{Hal15} for a treatment of matrix Lie groups. \\

Let $G$ be a finite-dimensional Lie group, i.e., a finite dimensional group with a smooth manifold structure such that the group operations of multiplication and inversion are smooth. We write $e$ for the identity element of $G$. The Lie algebra $\fg$ of $G$ is defined as the tangent space $T_eG$ at the identity.\\

Next, we want to equip $\fg$ with a \emph{Lie bracket} $[\cdot,\cdot]$, which is a map from $\fg\times\fg$ into $\fg$ which is bilinear, skew-symmetric and satisfies the Jacobi identity:
$$
[X,[Y,Z]] + [Y,[Z,X]] + [Z,[X,Y]] = 0,
$$
for all $X,Y,Z \in \fg$. In order to construct such a Lie bracket, we need a different interpretation of the Lie algebra $\fg$. 

To this end, we denote by $L_g:G \to G$ left multiplication with $g$. A vector field $V$ on $G$ is called \textit{left-invariant} if for all $g,h \in G$ we have $\dd L_g(h)(V(h)) = V(gh)$.

For every $X \in \fg$, we can define a left-invariant vector field $X^L$ on $G$ by setting 
\begin{equation}\label{eq:left_invariant_vector_field}
X^L(g) = \dd L_g(e)(X).
\end{equation}
This is a vector space isomorphism, with inverse given by the evaluation of the vector field at the identity $e$. Consequently, the Lie algebra $\fg$ of $G$ may be identified with the set of left-invariant vector fields on $G$. This set forms a Lie algebra under the Lie bracket $[V,W] = VW - WV$. Consequently, we define the Lie bracket $[X,Y]$ for $X,Y \in \fg$ by $[X,Y] := [X^L,Y^L](e)$. \\

The above prodecure also shows us that for every $g \in G$ we can identify the tangent space $T_gM$ with $\fg$ via the isomorphism $\dd L_g(e):\fg \to T_gM$. Whenever we consider a tangent vector $X \in T_gM$ as elements of $\fg$, we have this identification in mind.

\subsubsection{Exponential map}

We now define an important function that allows us to map elements of the Lie algebra to the Lie group. For every $X \in \fg$, there exists a curve $\gamma_X:\RR \to G$ satisfying $\gamma_X(0) = e$ and $\dot\gamma_X(t) = X^L(\gamma_X(t))$ (note that $\dot\gamma(0) = X$ in this case). Using this curve, we define the \textit{exponential map} $\exp:\fg \to G$ by $\exp(X) = \gamma_X(1)$. \\

For every $X \in \fg$ we have 
$$
\dd\exp(0)(X) = \frac{\dd}{\dd t}\Big|_{t = 0} \exp(tX) = X
$$
so that $\dd\exp(0) = I$. Consequently, by the inverse function theorem, there exists an $r > 0$ such that $\exp$ is homeomorphism from $B(0,r)$ onto its image. The inverse of the exponential map is refered to as the \emph{logarithm map}, and is denoted by $\log$. We have the following proposition.

\begin{proposition}\label{prop:injectivity}
For every $r > 0$ such that $\exp$ is a homeomorphism on $\overline{B(0,r)}$, there exists an $\epsilon > 0$ such that $\log$ is well-defined on $\overline{B(e,\epsilon)}$ and such that for all $g \in \overline{B(e,\epsilon)}$ we have $|\log(g)| \leq r$.
\end{proposition}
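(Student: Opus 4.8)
The plan is to exploit that $\exp$ is a homeomorphism on the compact ball $\overline{B(0,r)}$, so in particular it is a continuous open injection there, and then argue by compactness that the image contains a ball around $e$ on which the inverse behaves as claimed. First I would set $K := \exp\bigl(\overline{B(0,r)}\bigr)$, which is compact (continuous image of a compact set) and contains $e = \exp(0)$. Since $\exp$ restricted to $\overline{B(0,r)}$ is a homeomorphism onto $K$, the inverse $\log : K \to \overline{B(0,r)}$ is continuous; in particular $\log$ is well-defined on all of $K$ and automatically satisfies $|\log(g)| \le r$ for every $g \in K$, simply because $\log(g) \in \overline{B(0,r)}$. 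So the only real content is to produce $\epsilon > 0$ with $\overline{B(e,\epsilon)} \subset K$.

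For that step I would use that $\exp$ is a local homeomorphism at $0$ (from $\dd\exp(0) = I$ and the inverse function theorem, as recalled in the excerpt): there is some $0 < r' \le r$ and some open $U \ni e$ with $\exp : B(0,r') \to U$ a homeomorphism, so $U \subset \exp(B(0,r')) \subset \exp(\overline{B(0,r)}) = K$. Since $U$ is open and contains $e$, pick $\epsilon > 0$ with $\overline{B(e,\epsilon)} \subset U \subset K$. Then $\log$ is defined on $\overline{B(e,\epsilon)}$ (as the restriction of the already-defined continuous inverse on $K$), and for $g \in \overline{B(e,\epsilon)}$ we have $\log(g) \in \overline{B(0,r)}$, i.e. $|\log(g)| \le r$, as required.

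The one subtlety — and the step I expect to be the main obstacle, though it is minor — is making sure the two notions of "$\log$" agree: the local inverse coming from the inverse function theorem on $B(0,r')$ versus the inverse of the homeomorphism $\exp|_{\overline{B(0,r)}}$. This is immediate from injectivity of $\exp$ on $\overline{B(0,r)}$: any preimage under $\exp$ of a point of $U$ that lies in $\overline{B(0,r)}$ is unique, so the two inverses coincide wherever both are defined. With that observation the argument is complete; no estimates beyond "a point of $\overline{B(0,r)}$ has norm $\le r$" are needed, which is why I would not belabour the computation.
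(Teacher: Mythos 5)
Your proof is correct and follows essentially the same route as the paper: exhibit an open neighbourhood of $e$ inside $\exp(\overline{B(0,r)})$, shrink to a closed ball, and use that $\log(g) \in \overline{B(0,r)}$ forces $|\log(g)| \le r$. The only difference is cosmetic — you obtain openness of the image near $e$ via the local inverse from the inverse function theorem, whereas the paper asserts directly that $\exp$ being a homeomorphism makes it an open map (which, as stated, is a slight shortcut that really rests on invariance of domain or the local diffeomorphism property you invoke); your version is a touch more careful on that point but yields the same argument.
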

\begin{proof}
Because $\exp$ is a homeomorphism, it is an open map, and hence $\exp(\overline{B(0,r)})$ contains some open ball $B(e,\epsilon)$. Because $\exp(\overline{B(0,r)})$ is closed, it must be that $\overline{B(e,\epsilon)} \subset \exp(\overline{B(0,r)})$ so that $\log$ is well defined on $\overline{B(e,\epsilon)}$ and $\log(\overline{B(e,\epsilon)}) \subset \overline{B(0,r)}$ as desired.
\end{proof}

\subsubsection{Riemannian metric}

For reasons that will become apparent later, we equip $\fg$ with an inner product $\inp{\cdot}{\cdot}$. This induces on $\fg$ a norm $|\cdot|$ given by $|X| = \sqrt{\inp{X}{X}}$. Because $\fg$ is finite-dimensional, all norms are equivalent, and hence, our results will not depend on the choice of inner product. 

The inner product on $\fg$ may be extended to a Riemannian metric on $G$. For this, we use the fact that $T_gM$ may be identified with $\fg$ via the isomorphism $\dd L_g(e)$. With this identification in mind, we can define an inner product on $T_gG$ by 
$$
\inp{X}{Y}_g = \inp{\dd L_g(e)^{-1}X}{\dd L_g(e)^{-1}Y},
$$
The assumption that the group operations are smooth implies that this defines a Riemannian metric on $G$. By construction this Riemannian metric is left-invariant, i.e., for all $g,h \in G$ and for all $X,Y \in T_gG$ we have
$$
\inp{\dd L_h(g)X}{\dd L_h(g)Y}_{hg} = \inp{X}{Y}_g.
$$
This shows that $\dd L_h(g):T_gG \to T_hG$ is an isometry. In particular, the identification $\dd L_g(e):\fg \to T_gG$ of $T_gG$ with the Lie algebra $\fg$ is also an isometry. Consequently, if we consider $X \in T_gG$ as element in $\fg$, its norm can also be taken as element of $\fg$\\. 

To the Riemannian metric we can associate a Riemannian distance $d:G\times G \to \RR$ given by
$$
d(g,h) = \inf\left\{\int_0^1 |\dot\gamma(t)| \ud t \middle | \gamma:[0,1] \to G \mbox{ piecewise smooth}, \gamma(0) = g, \gamma(1) = h\right\}.
$$
Because the Riemannian metric is left-invariant, it follows that for all $f,g,h \in G$ we have
\begin{equation}\label{eq:left_invariance_distance}
d(g,h) = d(fg,fh).
\end{equation}
This shows that the distance between elements of $G$ is preserved under left-multiplication.


\section{Random walks in Lie groups}\label{section:random_walks}

In this section we introduce the concept of a random walk in a general (connected) Lie group $G$. We will relate this concept to geodesic random walks in general Riemannian manifolds, as defined in \cite{Jor75,Ver19,KRV18}.

\subsection{Definition of a random walk in $G$.}

Let $\mu$ be a measure on the Lie algebra $\fg$. Let $X_1,X_2,\ldots$ be a sequence of i.i.d. random variables in $\fg$ with distribution $\mu$. We define the random walk $\Ss_n \in G$ by 
\begin{equation}\label{eq:random_walk}
\Ss_n = \exp(X_1)\exp(X_2)\cdots\exp(X_n).
\end{equation}
Furthermore, we define the rescaled random walk by
\begin{equation}\label{eq:rescaled_random_walk}
\sigma_n^n = \Exp\left(\frac1nX_1\right)\Exp\left(\frac1nX_2\right)\cdots\Exp\left(\frac1nX_n\right).
\end{equation}

In order to relate this to the concept of geodesic random walks in \cite{Jor75,Ver19,KRV18}, we need to argue how one-parameter subgroups of the form $\gamma(t) = g\exp(tX)$ can be interpreted as geodesics. To this end, we need some additional theory from Lie groups.

\begin{definition}
Let $\Nabla$ be a connection on a Lie group $G$. $\Nabla$ is said to be left-invariant if for any two left-invariant vector fields $X^L$ and $Y^L$ (see \eqref{eq:left_invariant_vector_field}) with $X,Y \in \fg$ we have that $\Nabla_{X^L}Y^L$ is also left-invariant.
\end{definition}

Among the left-invariant connections, there are special connections for which the one-parameter subgroups form geodesics.

\begin{definition}
A Cartan connection on a Lie group $G$ is a left-invariant connection satisfying the property that the subgroup $\gamma(t) = \exp(tX)$ is a geodesic for every $X \in \fg$.
\end{definition}

One question that arises, is whether such connections always exist. This is indeed the case, as the following result from \cite{Mil76} states.

\begin{proposition}\label{prop:Cartan_connection}
For any Lie group $G$ there exists a unique symmetric Cartan connection $\Nabla$ given by
$$
\Nabla_{X^L}Y^L = \frac12[X,Y]^L
$$
for any $X,Y \in \fg$. 
\end{proposition}

Consequently, our definition of a random walk on $G$ coincides with the definition of a geodesic random walk when we equip $G$ with a Cartan connection. Although this connection can be chosen to be symmetric, it is in general not possible to choose it so that it is also compatible with the Riemannian metric. In the case of a bi-invariant metric, this is possible, and the Cartan connection in Proposition \ref{prop:Cartan_connection} is compatible with the metric and thus coincides with the Levi-Civita connection. In this case, the exponential map $\exp:\fg \to G$ coincides with the Riemannian exponential map. 

In order to connect our result to the results in \cite{Ver19,KRV18}, we need to show that our measure $\mu$ can be extended to a collection of measures $\{\mu_g\}_{g\in G}$ which are invariant under parallel transport in the sense of \cite[Definition 3.7]{Ver19}. Along geodesics of the form $\gamma(t) = \exp(tX)g$, parallel transport is given by $\dd L_{\exp(tX)}(g)$. If we now set 
$$
\mu_g = \mu \circ \dd L_g(e)^{-1},
$$
then the bi-invarience of the Riemannian metric shows that the collection $\{\mu_g\}_{g\in G}$ is invariant under parallel transport, at least when transporting along geodesics. It is actually possible to show that in this case, the invariance also holds along arbitrary piecewise smooth curves.

The above shows that if $G$ admits a bi-invariant metric, the our notion of a random walk with i.i.d. increments coincides with the notion of a geodesic random walk with i.i.d. increments as in \cite{Ver19,KRV18}. Although the proof is somewhat simpler, our results do not add anything over the results in \cite{Ver19,KRV18} if $G$ admits a bi-invariant metric. The novelty is in the case when no such bi-invariant metric exists.


\section{Main theorem, sketch of the proof and an example}\label{section:main_theorem}

With all the notation fixed, we are ready to state in this section the main theorem that we are going to prove. Because the proof consists of a number of steps, we also provide a sketch of the proof so that the main steps are clear. The precise proof will be given in Section \ref{section:proof}. We conclude the section by showing how the main theorem can be applied if we consider the Lie group of stochastic matrices.

\subsection{Statement of the main theorem}

Let $G$ be a Lie group with Lie algebra $\fg$ equiped with an inner product $\inp{\cdot}{\cdot}$. Let $\{X_n\}_{n\geq1}$ be a sequence of i.i.d. random variables in the Lie algebra $\fg$ and denote by $\sigma_n^n$ the rescaled random walk as in \eqref{eq:rescaled_random_walk}. We are going to prove that under some assumptions on the increments $\{X_n\}_{n\geq1}$, the sequence $\{\sigma_n^n\}_{n\geq1}$ satisfies a large deviation principle in $G$.

Along with the large deviation principle for $\{\sigma_n^n\}_{n\geq1}$, we need to identify the corresponding rate function. If $G$ admits a bi-invariant metric, it follows from \cite[Theorem 4.1]{Ver19} or \cite[Theorem 5.5]{KRV18} that the rate function is given by
$$
I(g) = \inf\{\Lambda^*(X)|\exp(X) = g\}.
$$
Here, $\Lambda(\lambda)$ is the log moment generating function of an increment, given by
$$
\Lambda(\lambda) := \log\EE\left(e^{\inp{\lambda}{X_1}}\right),
$$ 
while $\Lambda^*$ denotes its Legendre transform, defined as
$$
\Lambda^*(X) := \sup_{\lambda \in \fg}  \inp{\lambda}{X} - \Lambda(\lambda).
$$

Obtaining this form of the rate function relies on the fact that if we minimize $\int_0^1 \Lambda^*(\dot\gamma(t)) \ud t$ over curves with fixed endpoints, the minimum is attained by a geodesic. However, if $G$ does not admit a bi-invariant metric, curves of the form $\gamma(t) = \exp(tX)$ are no longer necessarily geodesics (when taking the exponential map in the terminology of Lie groups and Lie algebra's). Consequently, we can do no better than the expression
$$
I(g) = \inf\left\{\int_0^1 \Lambda^*(\dot\gamma(t))\ud t \middle | \gamma \in \Aa\Cc([0,1];G), \gamma(0) = e, \gamma(1) = g\right\}.
$$

We now collect everything and give the statement of the theorem.

\begin{theorem}\label{theorem:Cramer_Lie}
Let $G$ be a Lie group and $\fg$ its associated Lie algebra, equipped with an inner product $\inp{\cdot}{\cdot}$. Let $\{X_n\}_{n\geq1}$ be a sequence of random variables in $\fg$ and denote by $\sigma_n^n$ the associated rescaled random walk as in \eqref{eq:rescaled_random_walk}. Assume that the sequence $\{X_n\}_{n\geq1}$ is i.i.d. and bounded, and assume that the log moment generating function $\Lambda(\lambda) = \log\EE(e^{\inp{\lambda}{X_1}})$ is everywhere finite. Then the sequence $\{\sigma_n^n\}_{n\geq0}$ satisfies in $G$ the large deviation principle with good rate function
\begin{equation}\label{eq:ratefunction}
I_G(g) = \inf\left\{\int_0^1 \Lambda^*(\dot\gamma(t))\ud t \middle | \gamma \in \Aa\Cc([0,1];G), \gamma(0) = e, \gamma(1) = g\right\}.
\end{equation}
\end{theorem}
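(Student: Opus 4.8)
The plan is to follow the strategy of \cite{Ver19}: establish the large deviation principle first in the Lie algebra $\fg$, where Cramér's theorem and Mogulskii-type path-space results apply directly, and then transport the statement to $G$ via the exponential map together with a careful control of the Baker–Campbell–Hausdorff (BCH) corrections. The key idea is that the rescaled walk $\sigma_n^n$ should be exponentially equivalent (on the scale $n$) to $\exp\!\bigl(\tfrac1n\sum_{i=1}^n X_i\bigr)$, or more precisely to a walk whose increments live along a polygonal path in $\fg$ whose derivative tracks the empirical mean. I would begin by reducing to a local statement: since the $X_n$ are bounded, say $|X_n|\le R$ a.s., and since $\exp$ is a homeomorphism on a ball $\overline{B(0,r)}$ (Proposition \ref{prop:injectivity}), for $n$ large enough each factor $\Exp(\tfrac1n X_i)$ lies in a small neighbourhood of $e$, so one can work in logarithmic coordinates and use BCH.

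The main technical input, which I expect to be developed in Section \ref{section:estimates}, is a quantitative BCH estimate: for $X,Y\in\fg$ with $|X|,|Y|$ small one has $\log(\exp X\exp Y) = X+Y+\tfrac12[X,Y]+\dots$, and more importantly, iterating this for the $n$-fold product with increments of size $O(1/n)$ one obtains
\[
\sigma_n^n = \Exp\!\left(\tfrac1n\sum_{i=1}^n X_i + E_n\right),
\]
where $E_n$ is a sum of commutator terms each carrying an extra factor $1/n$, hence $|E_n| = O(1/n)$ deterministically on the event that the partial sums stay bounded (which, by boundedness of the increments, is automatic after rescaling). The point — and this is where the approach differs from the dilation method of \cite{Bre04,BC99,Gav77,Neu96} — is that we do not need to scale away commutators by hand; the single factor $1/n$ in front of each increment, combined with the fact that a commutator of two such terms is $O(1/n^2)$ and there are $O(n)$ of them at first order, $O(n^2)$ at second order but with weight $1/n^3$, etc., already forces the whole correction to vanish. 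Summing the combinatorial series and using boundedness to control convergence of the BCH series uniformly is the part requiring genuine care.

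With this estimate in hand I would proceed as follows. First, regard the piecewise-constant-increment process: let $W_n(t)$ be the element of $G$ obtained from the first $\lfloor nt\rfloor$ factors; then the BCH estimate shows $W_n(\cdot)$ is, up to an error that is superexponentially small, the image under $\exp$ of the polygonal path $t\mapsto \tfrac1n\sum_{i\le nt} X_i$ in $\fg$. By Mogulskii's theorem (or the version used in \cite{Ver19,KRV18}), the latter paths satisfy the LDP in $\Aa\Cc([0,1];\fg)$ with rate function $\int_0^1 \Lambda^*(\dot\phi(t))\,dt$. One then pushes this forward: the map sending a path $\phi$ in $\fg$ to the terminal point of the $G$-valued path it generates (via the left-invariant vector fields, i.e.\ the path $\gamma$ solving $\dot\gamma(t) = \dd L_{\gamma(t)}(e)\dot\phi(t)$, $\gamma(0)=e$) is continuous, so the contraction principle yields the LDP for $\sigma_n^n$ in $G$ with rate function $\inf\{\int_0^1\Lambda^*(\dot\phi)\,dt : \gamma_\phi(1)=g\}$; re-expressing the infimum over $\phi$ as an infimum over the absolutely continuous path $\gamma$ itself (using that $\dot\gamma(t)$, read in $\fg$ via $\dd L_{\gamma(t)}(e)^{-1}$, equals $\dot\phi(t)$) gives exactly \eqref{eq:ratefunction}. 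Goodness of $I_G$ and exponential tightness of $\{\sigma_n^n\}$ follow from boundedness of the increments together with properness of $\Lambda^*$ (finite everywhere implies superlinear growth of $\Lambda^*$), which upgrades the weak LDP to the full LDP.

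The main obstacle is the uniform control of the BCH error term $E_n$: one must show not merely that each commutator correction is small, but that the full nonlinear rearrangement of the $n$-fold product into a single exponential introduces an error that is negligible on the large-deviation scale — i.e.\ that for every $\delta>0$,
\[
\limsup_{n\to\infty}\frac1n\log\PP\!\left(d\bigl(\sigma_n^n,\ \Exp(\tfrac1n\textstyle\sum_i X_i)\bigr) > \delta\right) = -\infty.
\]
Establishing this exponential equivalence rigorously — handling the convergence of the BCH series, the accumulation of errors across the $n$ multiplications, and the passage between $d$ on $G$ and $|\cdot|$ on $\fg$ via Proposition \ref{prop:injectivity} — is the technical heart of the argument and is what Section \ref{section:estimates} is presumably dedicated to.
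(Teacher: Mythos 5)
There is a genuine gap, and the paper itself warns against precisely the claim your plan rests on. You assert that $\sigma_n^n = \exp\bigl(\tfrac1n\sum_{i=1}^n X_i + E_n\bigr)$ with $|E_n| = O(1/n)$ deterministically, by a commutator count. The count is wrong: at the first commutator order of BCH there are $\binom{n}{2}=O(n^2)$ terms $[X_i,X_j]$, each with weight $1/n^2$, so that order alone contributes $O(1)$, not $O(1/n)$; the paper's introduction states exactly this, $\sigma_n^n = \exp\bigl(\tfrac1n\sum X_i + \Oo(1)\bigr)$, and the sketch in Section \ref{section:sketch} records the honest one-step estimate as $\bigl|\log(\sigma_n^n) - \tfrac1n\sum X_i\bigr| \le C_B B$ (see \eqref{eq:comparison_sketch}), a bound that does not shrink with $n$. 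Consequently the exponential equivalence you propose to prove, $\limsup_n \tfrac1n\log\PP\bigl(d(\sigma_n^n,\exp(\tfrac1n\sum X_i))>\delta\bigr)=-\infty$, fails in general. It also could not be true: if it held, contraction through $\exp$ would produce the rate function $\inf\{\Lambda^*(X):\exp(X)=g\}$, which the paper explicitly argues is only correct when $G$ admits a bi-invariant metric, whereas for general $G$ the answer is the path-integral expression \eqref{eq:ratefunction}. Your proposal is in fact internally inconsistent on this point: you first say $W_n$ is superexponentially close to $\exp$ of the polygonal path (which would force the pointwise-$\exp$ rate function), but then contract through the development map $\phi\mapsto\gamma$ solving $\dot\gamma=\dd L_{\gamma}(e)\dot\phi$, which is a different map and gives the path-integral rate function. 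Only the latter is compatible with \eqref{eq:ratefunction}.

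The device you are missing is the paper's second scale. Fix $m\in\NN$ and split the walk into $m$ blocks of $\lfloor m^{-1}n\rfloor$ increments each. Within a block all partial products stay within distance $B/m$ of the identity, so the one-step BCH estimate of Proposition \ref{prop:log_product} applies with a constant $C_{m^{-1}B}$ that \emph{does} tend to zero as $m\to\infty$. This gives per-block control $\bigl|\log\bigl((\sigma_{n_{j-1}}^n)^{-1}\sigma_{n_j}^n\bigr) - \tfrac1n\sum_i X_{n_{j-1}+i}\bigr| \le C_{m^{-1}B}\,B/m$ as in \eqref{eq:replacement2}. One then runs a Cramér-type upper bound directly on $\fg^m$ for the $m$-tuple of block logarithms (Proposition \ref{prop:upper_bound_algebra}), pushes forward by the continuous map $\Psi_m(x_1,\dots,x_m)=\exp(x_1)\cdots\exp(x_m)$, and sends $m\to\infty$ only \emph{after} $n\to\infty$. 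The lower bound similarly discretizes a candidate curve $\gamma\in\Aa\Cc([0,1];G)$ into $m$ pieces, using Propositions \ref{prop:continuity_repeated_exponential} and \ref{prop:fundamental_theorem} to compare $\log\bigl(\gamma(\tfrac{i-1}{m})^{-1}\gamma(\tfrac im)\bigr)$ to $\int_{(i-1)/m}^{i/m}\dot\gamma\,\dd t$ and then invokes Cramér on each block. Replacing your single global BCH approximation by this two-scale (first $n\to\infty$, then $m\to\infty$) argument is the essential missing idea.
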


Because the proof of Theorem \ref{theorem:Cramer_Lie} is rather long, we first provide a sketch of the proof, before we get to the actual details in Section \ref{section:proof}.

\subsection{Sketch of the proof of Theorem \ref{theorem:Cramer_Lie}}\label{section:sketch}

The proof of Theorem \ref{theorem:Cramer_Lie} is inspired by the proof of \cite[Theorem 4.1]{Ver19}, and consequently, we will follow similar steps as explained in \cite[Section 4]{Ver19}. Like in the proof of the large deviations for rescaled random walks in Euclidean space, known as Cram\'er's theorem, we prove the upper and lower bound seperately.\\

By Cram\'er's theorem for vector spaces (see e.g. \cite[Chapter 2]{DZ98} or \cite[Chapter 1]{Hol00}), the sequence $\{\frac1n\sum_{i=1}^n X_i\}_{n\geq1}$ of empirical averages satisfies the large deviation principle in $\fg$ with good rate function $I(X) = \Lambda^*(X)$. Consequently, by the contraction principle (see e.g. \cite[Chapter 4]{DZ98}), the sequence $\{\Sigma_n\}_{n\geq 1}$ given by
$$
\Sigma_n = \exp\left(\frac1n\sum_{i=1}^n X_i\right)
$$
satisfies the large deviation principle in $G$ with good rate function
$$
I_G(g) = \inf\{\Lambda^*(X)|\exp(X) = g\}.
$$

Unfortunately, the Baker-Campbell-Hausdorff formula shows us that in general, $\Sigma_n$ and $\sigma_n^n$ do not coincide. More precisely, given that the random walk stays close enough to the identity $e$, so that logarithms are well-defined, the integral version of the Baker-Campbell-Hausdorff formula (see Theorem \ref{theorem:BCH}) gives us that
\begin{equation}\label{eq:Taylor_sketch}
\log(\sigma_n^n) = \frac1n\sum_{i=1}^n \left(\int_0^1 \frac{\ad_{\log(\sigma_{i-1}^n)}}{1 - e^{-\ad_{\log(\sigma_{i-1}^n)}}}\right)X_i
\end{equation}
Here, $\sigma_i^n$ is defined to be the point of the random walk after $i$ steps, i.e.,
$$
\sigma_i^n = \exp\left(\frac1nX_1\right)\cdots\exp\left(\frac1nX_i\right).
$$




However, we would like to understand the difference between $\log(\sigma_n^n)$ tand $\frac1n\sum_{i=1}^n X_i$. For this, we compare $\frac1n\sum_{i=1}^n X_i$ to the expression found in \eqref{eq:Taylor_sketch} for $\log(\sigma_n^n)$. We prove (see Proposition \ref{prop:log_product}) that there exists constants $C_{|\log(\sigma_{i-1}^n)|}$ such that
$$
\left|\left(\int_0^1 \frac{\ad_{\log(\sigma_{i-1}^n)}}{1 - e^{-\ad_{\log(\sigma_{i-1}^n)}}}\right) X_i - X_i\right| \leq C_{|\log(\sigma_{i-1}^n)|}|X_i|,
$$
where $C_\alpha$ is a constant, decreasing in $\alpha$ and such that $\lim_{\alpha \to 0} C_\alpha = 0$.

Using the triangle inequality and the smoothness of $\log$, one can show that $|\log(\sigma_i^n)| \lesssim \frac in B$, where $B$ is the uniform bound on the increments. Consequently, $C_{|\log(\sigma_{i-1}^n)|} \leq C_B$ for all $i = 1,\ldots,n$. If we now collect everything, we find
\begin{equation}\label{eq:comparison_sketch}
\left|\log(\sigma_n^n) - \frac1n\sum_{i=1}^n X_i\right| \leq C_BB.
\end{equation}

Because $B$ is fixed, this upper bound unfortunately does not show us that that $\log(\sigma_n^n)$ and $\frac1n\sum_{i=1}^n X_i$ will get arbitrarily close if $n$ tends to infinity. The key will be to decrease the constant $C_BB$ in an appropriate way.\\ 

To do this, we split the random walk into finitely many, say $m$, pieces, each consisting of $\lfloor m^{-1}n\rfloor$ increments. It turns out that this also takes care of the problem that the logarithms we use are not necessarily well-defined. More precisely, for $m \in \NN$ we define the indices $n_j = j\lfloor m^{-1}n\rfloor$ for $j = 0,\ldots,m-1$ and set $n_m = n$. We can prove (see \eqref{eq:estimate_distance} and \eqref{eq:distance_pieces}) that if $B$ is the uniform bound on the increments, then for every $j = 1,\ldots,m$ and $i = 1,\ldots,n_j - n_{j-1}$ we have
$$
d(e,(\sigma_{n_{j-1}}^n)^{-1}\sigma_{n_{j-1}+i}^n) = d(\sigma_{n_{j-1}}^n,\sigma_{n_{j-1}+i}^n) \leq \frac{i}{n}B \leq \frac1mB.
$$
Here, the first equality follows from the left-invariance of the metric $d$. Consequently, if $m \in \NN$ is large enough, then $\log((\sigma_{n_{j-1}}^n)^{-1}\sigma_{n_{j-1}+i}^n)$ is well-defined for every $j = 1,\ldots,m$ and every $i = 1,\ldots,n_j-n_{j-1}$. In particular, one may show in a similar spirit as \eqref{eq:comparison_sketch}, that
\begin{equation}\label{eq:replacement2}
\left|\log((\sigma_{n_{j-1}}^n)^{-1}\sigma_{n_j}^n) - \frac1n\sum_{i=1}^{n_j - n_{j-1}} X_{n_{j-1}+i}\right| \leq C_{m^{-1}B}\frac Bm.
\end{equation}

Now let us define $Y_{\lfloor m^{-1}n\rfloor}^{n,m,j} = \log((\sigma_{n_{j-1}}^n)^{-1}\sigma_{n_j}^n) \in \fg$ for $j = 1,\ldots,m$. By the above construction, we have that
$$
\sigma_n^n = \exp\left(Y_{\lfloor m^{-1}n\rfloor}^{n,m,1}\right)\cdots\exp\left(Y_{\lfloor m^{-1}n\rfloor}^{n,m,m}\right) =: \Psi_m\left(Y_{\lfloor m^{-1}n\rfloor}^{n,m,1},\ldots,Y_{\lfloor m^{-1}n\rfloor}^{n,m,m}\right),
$$  
where $\Psi_m:\fg^m \to G$ is the continuous function given by
$$
\Psi_m(x_1,\ldots,x_m) = \exp(x_1)\cdots\exp(x_m).
$$
Using this, we prove the upper and lower bound for the large deviation principle for $\{\sigma_n^n\}_{n\geq1}$, which we explain in the upcoming two sections.

\subsubsection{Upper bound of the large deviation principle for $\{\sigma_n^n\}_{n\geq1}$.}\label{section:sketch_upper}

In this section we sketch the proof of the upper bound of the large deviation principle for $\{\sigma_n^n\}_{n\geq1}$. For $F \subset G$ closed, and every $m \in \NN$ large enough, we have that $\Psi_m^{-1}F \subset \fg^m$ is closed and
\begin{equation}\label{eq:repeated_exponential_sketch}
\PP(\sigma_n^n \in F) \leq \PP\left(\left(Y_{\lfloor m^{-1}n\rfloor}^{n,m,1},\ldots,Y_{\lfloor m^{-1}n\rfloor}^{n,m,m}\right) \in \Psi_m^{-1}F\right).
\end{equation}

Because $\fg^m$ is a vector space, we can use a similar argument as in the proof of Cram\'er's theorem for the Euclidean setting (see e.g. \cite{DZ98,Hol00}), to obtain that
\begin{align*}
&\limsup_{n\to\infty} \frac1n\log\PP(\sigma_n^n \in F)
\\
&\leq
\limsup_{n\to\infty} \frac1n\log\PP\left(\left(Y_{\lfloor m^{-1}n\rfloor}^{n,m,1},\ldots,Y_{\lfloor m^{-1}n\rfloor}^{n,m,m}\right) \in \Psi_m^{-1}F\right)
\\
&\leq
-\inf_{x \in \fg^m} \sup_{\lambda \in \fg^m} \inp{\lambda}{x} - \limsup_{n\to\infty} \frac1n\log\EE\left(e^{n\inp{\lambda}{(Y_{\lfloor m^{-1}n\rfloor}^{n,m,1},\ldots,Y_{\lfloor m^{-1}n\rfloor}^{n,m,m})}}\right) 
\end{align*}

Now one can use \eqref{eq:replacement2} to prove that
$$
\EE\left(e^{n\inp{\lambda}{(Y_{\lfloor m^{-1}n\rfloor}^{n,m,1},\ldots,Y_{\lfloor m^{-1}n\rfloor}^{n,m,m})}}\right)  \leq e^{C_{m^{-1}B}|\lambda|Bm^{-1}}\EE\left(e^{n\inp{\lambda}{(Z_1^{m,n},\ldots,Z_m^{n,m})}}\right), 
$$
where
$$
Z_j^{m,n} = \frac1n\sum_{i=1}^{n_j - n_{j-1}} X_{n_{j-1} + i}.
$$
Now, because the sequence $\{X_n\}_{n\geq1}$ is i.i.d., the random variables $Z_1^{n,m},\ldots,Z_m^{n,m}$ are also i.i.d. with $\EE(e^{n\inp{\lambda}{Z_1^{n,m}}}) = \EE(e^{\inp{\lambda}{X_1}})^{\lfloor m^{-1}n\rfloor}$. Consequently, we find that
$$
\EE\left(e^{\inp{\lambda}{(Z_1^{m,n},\ldots,Z_m^{n,m})}}\right) = M(\lambda_1)^{\lfloor m^{-1}n\rfloor}\cdots M(\lambda_m)^{\lfloor m^{-1}n\rfloor}.
$$

Collecting everything, we find that
\begin{align*}
&\limsup_{n\to\infty} \frac1n\log\PP(\sigma_n^n \in F)
\\
&\leq
-\inf_{x \in \fg^m} \sup_{\lambda \in \fg^m} \left\{ \inp{\lambda}{x} - \frac1m\sum_{j=1}^m \Lambda(\lambda_i) - C_{m^{-1}B}|\lambda|B\frac1m\right\}
\\
&=
-\inf_{x \in \fg^m} \frac1m\sum_{j=1}^m \sup_{\lambda \in \fg} \left\{\inp{\lambda}{mx_j} - \Lambda(\lambda) - C_{m^{-1}B}|\lambda|B\right\}.
\end{align*}

Finally, by letting $m$ tend to infinity, apart from some technical difficulties, one obtains
$$
\limsup_{n\to\infty} \frac1n\log\PP(\sigma_n^n \in F) \leq -\inf_{g \in G} I_G(g),
$$
as desired.

\subsubsection{Lower bound of the large deviation principle for $\{\sigma_n^n\}_{n\geq1}$.}

To prove the lower bound of the large deviation principle for $\{\sigma_n^n\}_{n\geq1}$, we first observe that it is sufficient to show for every $U \subset G$ open and every $g \in G$ that
$$
\liminf_{n\to\infty} \frac1n\log\PP(\sigma_n^n \in G) \geq -\int_0^1 \Lambda^*(\dot\gamma(t))\ud dt 
$$
for all $\gamma \in \Aa\Cc([0,1];G)$ with $\gamma(0) = e$ and $\gamma(1) = g$.

To do this, we fix $\gamma \in \Aa\Cc([0,1];G)$ with $\gamma(0) = e$ and $\gamma(1) = g$ and define for $m \in \NN$ the vectors
$$
y_i^m := \log\left(\gamma\left(\frac{i-1}{m}\right)^{-1}\gamma\left(\frac im\right)\right) \in \fg.
$$
Note that $\Psi_m((y_1^m,\ldots,y_m^m)) = g$, where $\Psi_m:\fg^m \to G$ is as in \eqref{eq:repeated_exponential_sketch}. In order to continue, we need to know a bit more about the continuity properties of $\Psi_m$. More precisely, we will prove (see Proposition \ref{prop:continuity_repeated_exponential}) that there exists a constant $C > 0$ such that for $\epsilon > 0$ and $m \in \NN$ large enough, we have that if
$$
(x_1,\ldots,x_m) \in \prod_{i=1}^m B(y_i^m,(Cm)^{-1}\epsilon),
$$
then
$$ 
\Psi_m((x_1,\ldots,x_m)) \in B(\Psi_m((y_1^m,\ldots,y_m^m)),\epsilon) = B(g,\epsilon).
$$

Now, because the fundamental theorem of calculus fails, in that (compare to the Euclidean case)
$$
\log\left(\gamma\left(\frac{i-1}{m}\right)^{-1}\gamma\left(\frac im\right)\right) \neq \int_{\frac{i-1}{m}}^{\frac im} \dot\gamma(t) \ud t.
$$
We will show that under the condition that $\dot\gamma$ is bounded (see Proposition \ref{prop:fundamental_theorem}), we have
$$
\left|\log\left(\gamma\left(\frac{i-1}{m}\right)^{-1}\gamma\left(\frac im\right)\right) - \int_{\frac{i-1}{m}}^{\frac im} \dot\gamma(t) \ud t\right| \leq L_m\frac1m,
$$
where $\lim_{m\to\infty} L_m = \infty$. In particular, if we set
$$
\tilde y_i^m := \int_{\frac{i-1}{m}}^{\frac im} \dot\gamma(t) \ud t,
$$
then for $m$ large enough we have $B(\tilde y_i^m,(2Cm)^{-1}\epsilon) \subset B(y_i^m,(Cm)^{-1}\epsilon)$. Consequently, we have that if 
$$
(x_1,\ldots,x_m) \in \prod_{i=1}^m B(\tilde y_i^m,(2Cm)^{-1}\epsilon),
$$
then $\Psi_m((x_1,\ldots,x_m)) \in  B(g,\epsilon)$.\\

Because $U$ is open, there exists an $\epsilon > 0$ such that $B(\epsilon,g) \subset U$. Using the above continuity property, we find that
\begin{align*}
&\PP(\sigma_n^n \in U)
\\
&\geq 
\PP\left(\left(Y_{\lfloor m^{-1}n\rfloor}^{n,m,1},\ldots,Y_{\lfloor m^{-1}n\rfloor}^{n,m,m}\right) \in B(\tilde y_1^m,(2Cm)^{-1}\epsilon) \times \cdots \times B(\tilde y_m^m,(2Cm)^{-1}\epsilon)\right)
\end{align*}

Now using \eqref{eq:replacement2} and the fact that $\lim_{m\to\infty} C_{m^{-1}B} = 0$, we have for $m$ large enough that
$$
\left|Y_{\lfloor m^{-1}n\rfloor}^{n,m,j} - \frac1n\sum_{i=1}^{n_j - n_{j-1}}X_{n_{j-1} + i}\right| \leq (2Cm)^{-1}\frac\epsilon2.
$$
But then we find that
\begin{align*}
&\PP\left(\left(Y_{\lfloor m^{-1}n\rfloor}^{n,m,1},\ldots,Y_{\lfloor m^{-1}n\rfloor}^{n,m,m}\right) \in B(\tilde y_1^m,(2Cm)^{-1}\epsilon) \times \cdots \times B(\tilde y_m^m,(2Cm)^{-1}\epsilon)\right)
\\
&\geq
\PP\left(\left(\frac1n\sum_{i=1}^{n_1}X_i,\ldots,\frac1n\sum_{i=1}^{n_m - n_{m-1}}X_{n_{m-1} + i}\right) \in B(\tilde y_1^m,(2Cm)^{-1}\epsilon/2) \times \cdots \times B(\tilde y_m^m,(2Cm)^{-1}\epsilon/2)\right)
\\
&=
\prod_{j=1}^m \PP\left(\frac1n\sum_{i=1}^{n_j - n_{j-1}}X_{n_{j-1} + i} \in B(\tilde y_j^m,(2Cm)^{-1}\epsilon/2) \right).
\end{align*}

By Cram\'er's theorem for random walks in Euclidean space, we find that
$$
\liminf_{n\to\infty} \frac1n\log\PP\left(\frac1n\sum_{i=1}^{n_j - n_{j-1}}X_{n_{j-1} + i} \in B(y_j,(Cm)^{-1}\epsilon) \right) \geq -\frac1m\Lambda^*(my_j^m).
$$

Consequently, if we collect everything, we find that
$$
\liminf_{n\to\infty} \frac1n\log\PP(\sigma_n^n \in U) \geq -\frac1m\sum_{j=1}^m \Lambda^*(m\tilde y_j^m).
$$

Finally, using the convexity of $\Lambda^*$ and Jensen's inequality, we find that
$$
\frac1m\sum_{j=1}^m \Lambda^*(m\tilde y_j^m) \leq \sum_{i=1}^m \int_{\frac{i-1}{m}}^{\frac im} \Lambda^*(\dot\gamma(t)) \ud t = \int_0^1 \Lambda^*(\dot\gamma(t)) \ud t.
$$
From this, we then conclude that
$$
\liminf_{n\to\infty} \frac1n\log\PP(\sigma_n^n \in U) \geq -\int_0^1 \Lambda^*(\dot\gamma(t)) \ud t,
$$
which finishes the proof.

\subsection{Example: Products of transition matrices}

We conclude this section by discussing an example. In this example, we aim to study the limiting behaviour of products of transition matrices on a finite dimensional state space. For this we use the stochastic group and its Lie algebra, see e.g. \cite{GS18,Poo95}. For theory regarding matrix Lie groups, see e.g. \cite{Hal15}.\\

We define the set of transition matrices $\Tt(d,\RR)$ on $d$ states by
$$
\Tt(d,\RR) = \{P \in M(d,\RR)| P\mathbf{1} = \mathbf{1}, P_{ij} \geq 0 \mbox{ for } 1 \leq i,j \leq d\}.
$$
Here, $M(d,\RR)$ denotes the set of all $d\times d$-matrices, and $\mathbf{1}$ is the vector of all ones. Because we will be working with groups, we need inverses to be well-defined. We therefore consider the subset $\Ss_+(d,\RR)$ of invertible matrices in $\Tt(d,\RR)$, i.e.
$$
\Ss_+(d,\RR) = \{P \in \Tt(d,\RR)| \det(P) \neq 0\}.
$$
Note that $\Ss_+(d,\RR)$ is closed under matrix multiplication. Indeed, if $P$ and $Q$ have non-negative entries, then so does $PQ$. Furthermore, if $P\mathbf1 = \mathbf1$ and $Q\mathbf1 = \mathbf1$ then $PQ\mathbf1 = P\mathbf1 = \mathbf1$. Finally, if $P$ and $Q$ are invertible, then so is $PQ$. However, inverses of elements in $\Ss_+(d,\RR)$ need not have only non-negative entries. It turns out that the smallest group containing $\Ss_+(d,\RR)$ is given by
$$
\Ss(d,\RR) = \{P \in M(d,\RR)| \det(P) \neq 0, P\mathbf 1 = \mathbf 1\}.
$$ 
This group is called the \emph{stochastic group}. It is in fact a Lie group. Because we are dealing with matrix Lie groups, this follows from the observation that if $P_n \to P$ elementwise, and $P_n\mathbf1 = \mathbf 1$ for all $n$, then also $P\mathbf1 = \mathbf1$. \\

The Lie algebra associated to $\Ss(d,\RR)$ is given by
$$
\fs(d,\RR) = \{A \in M(d,\RR)|A\mathbf1 = 0\}.
$$
Indeed, if $A \in M(d,\RR)$ is such that $A\mathbf1 = 0$, then 
$$
\exp(tA)\mathbf1 = \mathbf1 + \left(\sum_{n=1}^\infty \frac{t^nA^{n-1}}{n!}\right)A\mathbf 1 = \mathbf 1.
$$
Consequently, $\exp(tA) \in \Ss(d,\RR)$ for all $t$, implying that 
$$
\{A \in M(d,\RR)|A\mathbf1 = 0\} \subset \fs(d,\RR).
$$
Conversely, if $\exp(tA)\mathbf1 = \mathbf1$ for all $t \in \RR$, then 
$$
A\mathbf1 = \frac{\dd}{\dd t}\Big|_{t=0} \exp(tA)\mathbf 1 = 0,
$$
so that
$$
\fs(d,\RR) \subset \{A \in M(d,\RR)|A\mathbf1 = 0\}.
$$

~\\

In order to consider random walks in the Lie group $\Ss(d,\RR)$ which only use invertible transition matrices, i.e., elements from $\Ss_+(d,\RR)$, we need to find a subset of $\fs(d,\RR)$ which is mapped by the exponential map into $\Ss_+(d,\RR)$. To this end, consider the set
$$
\fs_+(d,\RR) = \{A \in \fs(d,\RR)| A_{ij} \geq 0 \mbox{ whenever } i\neq j\}.
$$
We will prove that for all $A \in \fs_+(d,\RR)$ we have $\exp(A) \in \Ss_+(d,\RR)$. For this, it suffices to prove that $\exp(A)$ has nonnegative entries. To show this, we fix $k = \max_{i=1}^d |A_{ii}|$. Then the matrix $B = A + kI$ has nonnegative entries, from which it follows, using the Taylor series expression, that $\exp(B)$ has nonnegative entries. Because $A$ and $I$ commute, we have
$$
\exp(B) = \exp(A)\exp(kI) = e^k\exp(A),
$$
so that $\exp(A) = e^{-k}\exp(B)$. The latter now has nonnegative entries because $e^{-k} > 0$ and $\exp(B)$ has nonnegative entries.\\

Consequently, if we take a measure $\mu$ on $\fs(d,\RR)$ supported in $\fs_+(d,\RR)$, then the random walk $\Ss_n$ associated to an i.i.d. sequence $\{X_n\}_{n\geq1}$ will remain in $\Ss_+(\RR,d)$. This random walk may be thought of as the (random) $n$-step transition matrix of a Markov process with state space $\Omega = \{1,\ldots,d\}$.

From an increment $A \in \fs_+(d,\RR)$ of such a random walk, we can deduce some qualitative behaviour of the random walk. Indeed, for a state $i \in \{1,\ldots,d\}$ we have that the larger $|A_{ii}|$, the more mass remains at site $i$ after that iteration. The remainder of the mass at state $i$ is then distributed over the states $j \neq i$ according to the relative size of the $A_{ij}$.

\subsubsection{A specific example}

To get a better understanding of these random walks in $\Ss(d,\RR)$ and their limiting behaviour, we do the calculations for a specific example. For this, we take $d = 2$ and $\alpha,\beta > 0$. Consider the matrices
$$
A =
\left(\begin{array}{cc}
-\alpha & \alpha \\
0 & 0\\
\end{array}\right), \qquad
B = 
\left(\begin{array}{cc}
0 & 0\\
\beta & -\beta\\
\end{array}\right).
$$

Let $\{X_n\}_{n\geq1}$ be a sequence of i.i.d. random variables with $\PP(X_1 = A) = \PP(X_1 = B) = \frac12$. One may compute the exponentials of the matrices $A$ and $B$ to find
$$
\exp\left(\frac1nA\right) =
\left(\begin{array}{cc}
e^{-\frac1n\alpha} & 1 - e^{-\frac1n\alpha} \\
0 & 1\\
\end{array}\right), \qquad
\exp\left(\frac1nB\right) = 
\left(\begin{array}{cc}
1 & 0\\
1 - e^{-\frac1n\beta} & e^{-\frac1n\beta}\\
\end{array}\right).
$$

Intuitively, this process chooses one of the states uniformly at random and then distributes the mass at that state over the two states according to some parameter. Additionally, one sees that if $n$ tends to infinity, then the mass that is passed between states becomes exponentially small.

Now consider the rescaled random walk
$$
\sigma_n^n = \exp\left(\frac1nX_1\right)\cdots\exp\left(\frac1nX_n\right).
$$
By Theorem \ref{theorem:Cramer_Lie}, the sequence $\{\sigma_n^n\}_{n\geq1}$ satisfies in $\Ss(2,\RR)$ the large deviation principle. In order to quantify the rate function, we need to equip $\fs(2,\RR)$ with an inner product. For this, we will use the Frobenius inner product given by
$$
\inp{A}{B} = \Tr(A^TB) = \sum_{i,j=1}^2 A_{ij}B_{ij}.
$$

With this inner product, the log moment generating function $\Lambda: \fs(2,\RR) \to \RR$ of $X_1$ is given by
$$
\Lambda\left(\left(\begin{array}{cc}
-\lambda_1 & \lambda_1 \\
\lambda_2 & -\lambda_2\\
\end{array}\right)\right)
=
\log\left(\frac12e^{2\alpha\lambda_1} + \frac12e^{2\beta\lambda_2}\right).
$$

Let us compute $\Lambda^*:\fs(2,\RR) \to \RR$, i.e., we want to compute
\begin{align*}
\Lambda^*\left(\left(\begin{array}{cc}
-x_1 & x_1 \\
x_2 & -x_2\\
\end{array}\right)\right)
&=
\sup_{\lambda \in \fs(2,\RR)} \inp{\lambda}{x} - \Lambda(\lambda)
\\
&=
\sup_{\lambda_1,\lambda_2 \in \RR} 2\lambda_1x_1 + 2\lambda_2x_2 - \log\left(\frac12e^{2\alpha\lambda_1} + \frac12e^{2\beta\lambda_2}\right).
\end{align*}
Here we used that every $\lambda \in \fs(2,\RR)$ may be characterized by two elements $\lambda_1,\lambda_2 \in \RR$.

By taking $\lambda_2 = 0$ and letting $|\lambda_1|$ tend to infinity, we see that $\Lambda^*$ is infinite whenever $x_1 \notin [0,\alpha]$. In a similar way one can show that $\Lambda^*$ is infinite if $x_2 \notin [0,\beta]$. 

Next, we show that $\Lambda^*$ is also infinite if $\alpha x_2 + \beta x_1 \neq \alpha\beta$. To see this, take $\lambda_1,\lambda_2$ such that $\alpha\lambda_1 - \beta\lambda_2 = \alpha\beta$. Writing everything in terms of $\lambda_2$, we find that
\begin{align*}
\Lambda^*\left(\left(\begin{array}{cc}
-x_1 & x_1 \\
x_2 & -x_2\\
\end{array}\right)\right)
&\geq
2\lambda_1x_1 + 2\lambda_2x_2 - \log\left(\frac12e^{2\alpha\lambda_1} + \frac12e^{2\beta\lambda_2}\right)
\\
&=
2x_1\left(\beta + \frac\beta\alpha\lambda_2\right) + 2\lambda_2x_2 - \log\left(\frac12e^{2\beta\lambda_2}\left(e^{\alpha\beta} + 1\right)\right)
\\
&=
2\left(\frac\beta\alpha x_1 + x_2 - \beta\right)\lambda_2 + 2x_1 - \log\left(\frac12\left(e^{\alpha\beta} + 1\right)\right).
\end{align*}

Now by letting $|\lambda_2|$ tend to infinity, we see that, when maximized over $\lambda_2 \in \RR$, the above is only finite when 
$$
\frac\beta\alpha x_1 + x_2 - \beta = 0,
$$
which is equivalent to
$$
\beta x_1 + \alpha x_2 = \alpha\beta.
$$
~\\

Let us now compute the finite values of $\Lambda^*$. To this end, first consider the case $x_1 \in (0,\alpha)$ and $x_2 \in (0,\beta)$ with $\beta x_1 + \alpha x_2 = \alpha\beta$. Let us define 
$$
F(\lambda_1,\lambda_2) = 2\lambda_1x_1 + 2\lambda_2x_2 - \log\left(\frac12e^{2\alpha\lambda_1} + \frac12e^{2\beta\lambda_2}\right).
$$
Computing the gradient, and equating to 0, we find for the critical points of $F$ that
$$
x_1 = \frac{\alpha}{e^{2\alpha\lambda_1} + e^{2\beta\lambda_2}}e^{2\alpha\lambda_1}
$$
and
$$
x_2 = \frac{\beta}{e^{2\alpha\lambda_1} + e^{2\beta\lambda_2}}e^{2\beta\lambda_2}.
$$
Using that $\beta x_1 + \alpha x_2 = \alpha\beta$, we find that the above set of equations is solved by 
$$
\lambda_1^* = \frac{1}{2\alpha}\log(\beta x_1), \qquad \lambda_2^* = \frac{1}{2\beta}\log(\alpha x_2).
$$
Consequently, we find that
\begin{align*}
\Lambda^*\left(\left(\begin{array}{cc}
-x_1 & x_1 \\
x_2 & -x_2\\
\end{array}\right)\right)
&= 
F(\lambda_1^*,\lambda_2^*)
\\
&=
\frac1\alpha \log(\beta x_1)x_1 + \frac1\beta \log(\alpha x_2)x_2 - \log\left(\frac12\beta x_1 + \frac12\alpha x_2\right)
\\
&=
\frac1\alpha \log(\beta x_1)x_1 + \frac1\beta \log(\alpha x_2)x_2 - \log\left(\frac12\alpha\beta\right),
\end{align*}
where in the final step we used again that $\beta x_1 + \alpha x_2 = \alpha\beta$.

Now, in the case that $x_1 = 0$ and consequently, $x_2 = \beta$, we have
\begin{align*}
\Lambda^*\left(\left(\begin{array}{cc}
0 & 0 \\
\beta & -\beta\\
\end{array}\right)\right)
&=
\sup_{\lambda_2\in\RR} \sup_{\lambda_1\in\RR} \left\{2\lambda_2\beta - \log\left(\frac12e^{2\alpha\lambda_1} + \frac12e^{2\beta\lambda_2}\right)\right\}
\\
&=
\sup_{\lambda_2\in\RR} \left\{2\lambda_2\beta - \log\left(\frac12e^{2\beta\lambda_2}\right)\right\}
\\
&=
\log(2).
\end{align*}
Likewise, we also have
$$
\Lambda^*\left(\left(\begin{array}{cc}
-\alpha & \alpha \\
0 & 0\\
\end{array}\right)\right)
= \log(2).
$$

Now, the rate function for the large deviation principle for $\{\sigma_n^n\}_{n\geq1}$ is given by
$$
I(M) = \inf\left\{\int_0^1 \Lambda^*(\dot\gamma(t))\ud t \middle | \gamma \in \Aa\Cc([0,1];\Ss(2,\RR)), \gamma(0) = I, \gamma(1) = M\right\}.
$$

To get a more specific expression, we calculate the rate function further in the case where $\alpha = \beta$. Let $\gamma \in \Aa\Cc([0,1];\Ss(2,\RR))$ with $\gamma(0) = I$. Then we can write
$$
\gamma(t) = 
\left(\begin{array}{cc}
1 - \gamma_1(t) & \gamma_1(t) \\
\gamma_2(t) & 1 - \gamma_2(t)\\
\end{array}\right),
$$
so that
$$
\dot\gamma(t) = 
\left(\begin{array}{cc}
-\dot\gamma_1(t) & \dot\gamma_1(t) \\
\dot\gamma_2(t) & -\dot\gamma_2(t)\\
\end{array}\right) \in T_{\gamma(t)}\Ss(2,\RR).
$$
Now recall that we may identify $T_{\gamma(t)}\Ss(2,\RR)$ with $\fs(2,\RR)$ using the map $\dd L_{\gamma(t)}^{-1} = \dd L_{\gamma(t)^{-1}}$. Because $\Ss(2,\RR)$ is a matrix Lie group, we have
$$
\dd L_{\gamma(t)^{-1}}(X) = \gamma(t)^{-1}X.
$$
Consequently, as element of $\fs(2,\RR)$, the curve tangent to $\gamma$ is given by
\begin{align*}
& \dd L_{\gamma(t)^{-1}}(\dot\gamma) = \gamma(t)^{-1}\dot\gamma(t)
\\
&= 
\frac{1}{1 - \gamma_1(t) - \gamma_2(t)}\left(\begin{array}{cc}
-(1 - \gamma_2(t))\dot\gamma_1(t) - \gamma_1(t)\dot\gamma_2(t) & (1 - \gamma_2(t))\dot\gamma_1(t) + \gamma_1(t)\dot\gamma_2(t) \\
(1 - \gamma_1(t))\dot\gamma_2(t) + \gamma_2(t)\dot\gamma_1(t) & -(1 - \gamma_1(t))\dot\gamma_2(t) - \gamma_2(t)\dot\gamma_1(t)\\
\end{array}\right)
\end{align*}

Now, in order for 
$$
\int_0^1 \Lambda^*(\gamma(t)^{-1}\dot\gamma(t))\ud t
$$
to be finite, we need to have
$$
\beta\frac{(1 - \gamma_2(t))\dot\gamma_1(t) + \gamma_1(t)\dot\gamma_2(t)}{1 - \gamma_1(t) - \gamma_2(t)} + \alpha\frac{(1 - \gamma_1(t))\dot\gamma_2(t) + \gamma_2(t)\dot\gamma_1(t)}{1 - \gamma_1(t) - \gamma_2(t)} = \alpha\beta,
$$
because, as we have seen above, only then $\Lambda^*(\gamma(t)^{-1}\dot\gamma(t)) < \infty$.
Because $\alpha = \beta$, after some calculations, the above may be rewritten as
$$
\dot\gamma_1(t) + \dot\gamma_2(t) = \alpha(1 - (\gamma_1(t) + \gamma_2(t))).
$$
If we now write $\psi(t) = \gamma_1(t) + \gamma_2(t)$, the previous equality gives a differential equation for $\psi$, namely 
$$
\dot\psi(t) = \alpha(1 - \psi(t)),
$$
with $\psi(0) = \gamma_1(0) + \gamma_2(0) = 0$. Consequently, 
$$
\psi(t) = 1 - e^{-\alpha t}.
$$
In particular, this implies that 
$$
\gamma_1(1) + \gamma_2(1) = \psi(1) = 1 - e^{-\alpha}.
$$
From this, we deduce that $I(M)$ is only finite for matrices satisfying $M_{12} + M_{21} = 1 - e^{-\alpha}$. Now, if $M$ is such a matrix, the convexity of $\Lambda^*$ together with Jensen's inequality, implies that
$$
\inf\left\{\int_0^1 \Lambda^*(\dot\gamma(t))\ud t \middle | \gamma \in \Aa\Cc([0,1];\Ss(2,\RR)), \gamma(0) = I, \gamma(1) = M\right\}
$$
is attained when taking $\gamma_1^*(t) = c\psi(t)$ and $\gamma_2^*(t) = (1 - c)\psi(t)$. Because we need that $\gamma_1^*(1) = M_{12}$, we take
$$
\gamma_1^*(t) = \frac{M_{12}}{M_{12} + M_{21}}\psi(t) = \frac{M_{12}}{1 - e^{-\alpha}}(1 - e^{-\alpha t}),
$$
in which case
$$
\gamma_2^*(t) = \frac{M_{21}}{1 - e^{-\alpha}}(1 - e^{-\alpha t}).
$$

Using the expression for $\Lambda^*$ we derived above, one obtains after some computations that
\begin{align*}
I(M)
&=
\int_0^1 \Lambda^*\left(
\left(\begin{array}{cc}
-\dot\gamma^*_1(t) & \dot\gamma^*_1(t) \\
\dot\gamma^*_2(t) & -\dot\gamma^*_2(t)\\
\end{array}\right)\right) \ud t
\\
&= 
\alpha^2M_{12}\log\left(\frac{\alpha M_{12}}{1 - e^{-\alpha}}\right) - M_{12} - \frac{\alpha e^{-\alpha}M_{12}}{1 - e^{-\alpha}}
\\
&\qquad + 
\alpha^2M_{21}\log\left(\frac{\alpha M_{21}}{1 - e^{-\alpha}}\right) - M_{21} - \frac{\alpha e^{-\alpha}M_{21}}{1 - e^{-\alpha}} - \log\left(\frac12\alpha^2\right)
\\
&=
\alpha^2M_{12}\log\left(\frac{\alpha M_{12}}{1 - e^{-\alpha}}\right) + \alpha^2M_{21}\log\left(\frac{\alpha M_{21}}{1 - e^{-\alpha}}\right)
\\
&\qquad + 
(1 - \alpha) e^{-\alpha} - \log\left(\frac12\alpha^2\right) - 1
\end{align*}
if $M_{12} + M_{21}  =  1 - e^{-\alpha}$. Else, we have $I(M) = \infty$.

\section{Some estimation results from Lie group theory}\label{section:estimates}

In this section we use the integral version of the Baker-Campbell-Hausdorff formula to derive a key estimate we need for proving Theorem \ref{theorem:Cramer_Lie}. Essentially, we will show that for $X,Y \in \fg$ small enough, we can bound the difference between $\log(\exp(X)\exp(Y))$ and $X + Y$. In order to do this, we first introduce the integral version of the Baker-Campbell-Hausdorff formula.

\subsection{Baker-Campbell-Hausdorff formula}

Before we can state the Baker-Campbell-Hausdorff formula, we first need to introduce some linear operators on $\fg$. 

For every $X \in \fg$, we define the adjoint map $\ad_X:\fg \to \fg$ by 
$$
\ad_X(Y) := [X,Y].
$$

Because the map $(X,Y) \mapsto \ad_XY$ is smooth, it follows that $||\ad_X||$ depends continuously on $X$. In particular, this implies that 
$$
\sup_{X \in K} ||\ad_X|| < \infty
$$ 
for all $K \subset \fg$ compact. Additionally, it also gives us that
\[ \label{eq:small_adjoint}
\lim_{X \to 0} ||\ad_X|| = ||\ad_0|| = 0. 
\]

Because $\ad_X$ is a bounded operator, we can define the operator $e^{t\ad_X}$ by 
$$
e^{t\ad_X} = \sum_{m=0}^\infty \frac{t^m\ad_X^m}{m!}
$$
Similarly, for $f(z) = \frac{1 - e^{-z}}{z} = \sum_{m=0}^\infty \frac{(-1)^m}{(m+1)!}z^m$ we define the operator
\begin{equation}\label{eq:series_operator_diff}
\frac{I - e^{-\ad_X}}{\ad_X} = f(\ad_X) = \sum_{m=0}^\infty \frac{(-1)^m}{(m+1)!}\ad_X^m
\end{equation}

From this series repersentation, we find that
$$
\left|\left|I - \frac{I - e^{-\ad_X}}{\ad_X}\right|\right| \leq \sum_{k=1}^\infty \frac{||\ad_X||^k}{(k+1)!} \leq e^{||\ad_X||} - 1.
$$
Now, by \eqref{eq:small_adjoint} the upper bound goes to 0 if $X \to 0$. Consequently, if $|X|$ is small enough, then
$$
\frac{I - e^{-\ad_X}}{\ad_X}
$$
is invertible, with inverse given by
\begin{equation}\label{eq:series_inverse}
\frac{\ad_X}{I- e^{-\ad_X}} = g(e^{\ad_X})
\end{equation}
where $g(z) = \frac{z\log(z)}{z - 1} = 1 + \sum_{m=1}^\infty \frac{(-1)^{m+1}}{m(m+1)}(z-1)^m$ for $|z-1| < 1$.

With all relevant operators defined, we can state the integral form of the Baker-Campbell-Hausdorff formula, see e.g. \cite{Hal15,Vaj84}.

\begin{theorem}\label{theorem:BCH}
There exists an $r > 0$ such that for all $X,Y \in \fg$ with $|X|,|Y| \leq r$ we have that $\log(\exp(X)\exp(tY))$ is well-defined for all $t \in [0,1]$ and is given by
$$
\log(\exp(X)\exp(tY)) = X + \left(\int_0^t g(e^{\ad_X}e^{s\ad_Y})\ud s\right)Y,
$$
where $g(z) = \frac{z\log(z)}{z - 1} = 1 + \sum_{m=1}^\infty \frac{(-1)^{m+1}}{m(m+1)}(z-1)^m$ for $|z-1| < 1$
\end{theorem}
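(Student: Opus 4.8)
The plan is to turn the identity $\exp(Z(t)) = \exp(X)\exp(tY)$, where $Z(t) := \log(\exp(X)\exp(tY))$, into a first-order ODE for $Z$ and then integrate it.

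First I would settle well-definedness and fix $r$. Pick $r_0 > 0$ with $\exp$ a homeomorphism on $\overline{B(0,r_0)}$; Proposition \ref{prop:injectivity} gives $\epsilon > 0$ so that $\log$ is defined on $\overline{B(e,\epsilon)}$ with $|\log(g)| \le r_0$ there. The map $(X,Y,t) \mapsto \exp(X)\exp(tY)$ is smooth and equals $e$ at $X = Y = 0$, so by compactness of $[0,1]$ there is $r > 0$ such that $|X|,|Y| \le r$ forces $\exp(X)\exp(tY) \in \overline{B(e,\epsilon)}$ for all $t \in [0,1]$; hence $Z(t) = \log(\exp(X)\exp(tY))$ is well-defined, smooth in $t$, and $|Z(t)| \le r_0$. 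Shrinking $r$ further and using \eqref{eq:small_adjoint}, I can also make $\|\ad_{Z(t)}\|$ and $\|e^{\ad_X}e^{t\ad_Y} - I\|$ uniformly as small as needed, so that $\frac{I - e^{-\ad_{Z(t)}}}{\ad_{Z(t)}}$ is invertible with inverse $g(e^{\ad_{Z(t)}})$ as in \eqref{eq:series_inverse}, and so that $g(e^{\ad_X}e^{t\ad_Y})$ is given by its convergent power series in $e^{\ad_X}e^{t\ad_Y} - I$.

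Next I would differentiate $\exp(Z(t)) = \exp(X)\exp(tY)$ in $t$. For the right-hand side, $t \mapsto \exp(tY)$ is the integral curve of $Y^L$, so $\frac{\dd}{\dd t}[\exp(X)\exp(tY)] = \dd L_{\exp(X)\exp(tY)}(e)\,Y$. For the left-hand side I would invoke the standard formula for the differential of the exponential map, $\dd\exp(Z)(W) = \dd L_{\exp(Z)}(e)\big(\tfrac{I - e^{-\ad_Z}}{\ad_Z}W\big)$ (see e.g.\ \cite{Hal15}), giving $\frac{\dd}{\dd t}\exp(Z(t)) = \dd L_{\exp(Z(t))}(e)\big(\tfrac{I - e^{-\ad_{Z(t)}}}{\ad_{Z(t)}}\dot Z(t)\big)$. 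Since $\exp(Z(t)) = \exp(X)\exp(tY)$ and $\dd L_{\exp(Z(t))}(e)$ is a linear isomorphism, equating the two sides yields $\tfrac{I - e^{-\ad_{Z(t)}}}{\ad_{Z(t)}}\dot Z(t) = Y$, i.e.\ $\dot Z(t) = g(e^{\ad_{Z(t)}})\,Y$ after inverting via \eqref{eq:series_inverse}. Now I would use that $\Ad\colon G \to GL(\fg)$ is a homomorphism with $\Ad(\exp W) = e^{\ad_W}$; applying $\Ad$ to $\exp(Z(t)) = \exp(X)\exp(tY)$ gives $e^{\ad_{Z(t)}} = e^{\ad_X}e^{t\ad_Y}$, hence $\dot Z(t) = g(e^{\ad_X}e^{t\ad_Y})\,Y$. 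Integrating from $0$ to $t$ and using $Z(0) = \log(\exp X) = X$ yields $Z(t) = X + \big(\int_0^t g(e^{\ad_X}e^{s\ad_Y})\,\dd s\big)Y$, as claimed.

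The main obstacle is the differential-of-$\exp$ formula: it is the one nontrivial analytic input, and making the write-up self-contained would require proving it (by differentiating $\epsilon \mapsto \exp(-Z)\exp(Z + \epsilon W)$ in $s$ along $sZ$, or via the $\ad$-series), which I would rather cite than reprove. The only other delicate point is the bookkeeping in the first step: one must produce a single $r$ that simultaneously keeps $\exp(X)\exp(tY)$ in the domain of $\log$ for all $t \in [0,1]$, keeps $\|\ad_{Z(t)}\|$ small enough for $\tfrac{I - e^{-\ad_{Z(t)}}}{\ad_{Z(t)}}$ to be invertible, and keeps $\|e^{\ad_X}e^{t\ad_Y} - I\| < 1$ so the $g$-series converges; all three follow from continuity at the origin, but they have to be arranged in that order.
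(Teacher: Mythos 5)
The paper does not prove Theorem \ref{theorem:BCH}: it states it as a known result and cites \cite{Hal15,Vaj84} for the proof, so there is no ``paper's own proof'' to compare against. Your argument is correct and is essentially the standard textbook proof appearing in those references: reduce $\exp(Z(t)) = \exp(X)\exp(tY)$ to the ODE $\tfrac{I - e^{-\ad_{Z(t)}}}{\ad_{Z(t)}}\dot Z(t) = Y$ via the derivative-of-exponential formula, invert using $g$, identify $e^{\ad_{Z(t)}} = e^{\ad_X}e^{t\ad_Y}$ via $\Ad \circ \exp = \exp \circ\, \ad$ and the homomorphism property of $\Ad$, and integrate with $Z(0) = X$. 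The bookkeeping you flag for the choice of $r$ (simultaneously keeping $\exp(X)\exp(tY)$ in the domain of $\log$, keeping $\tfrac{I - e^{-\ad_{Z(t)}}}{\ad_{Z(t)}}$ invertible, and keeping $\|e^{\ad_X}e^{t\ad_Y} - I\| < 1$) is handled correctly by continuity at the origin and compactness of $[0,1]$, and citing rather than reproving the derivative-of-exponential formula is the same choice the paper implicitly makes by deferring to \cite{Hal15,Vaj84}.
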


We will now use this formula to deduce approximations for the logarithm of a product of exponentials.

\subsection{Logarithm of a product of exponentials}

In this section, we aim to control the difference
$$
\left|\log(\exp(X)\exp(Y)) - X - Y\right|,
$$ 
for $X$ and $Y$ small enough. We will do this using the Baker-Campbell-Hausdorff formula. We have the following proposition.

\begin{proposition}\label{prop:log_product}
There exists an $r > 0$ and for every $X \in \fg$ with $|X| \leq r$ a constant $C_X > 0$ such that $\lim_{X \to 0} C_X = 0$ and
$$
\left|\log(\exp(X)\exp(Y)) - X - Y\right| \leq C_X|Y|
$$
for all $|Y| \leq r$. Moreover, the constant $C_X$ may be chosen to only depend on $|X|$.
\end{proposition}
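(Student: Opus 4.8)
The plan is to start from the integral Baker--Campbell--Hausdorff formula (Theorem~\ref{theorem:BCH}), which is valid for $|X|,|Y|\le r$ and gives
$$
\log(\exp(X)\exp(Y)) = X + \left(\int_0^1 g(e^{\ad_X}e^{s\ad_Y})\ud s\right)Y.
$$
Subtracting $X+Y$ yields
$$
\log(\exp(X)\exp(Y)) - X - Y = \left(\int_0^1 g(e^{\ad_X}e^{s\ad_Y})\ud s - I\right)Y,
$$
so that, taking norms, it suffices to bound the operator norm
$$
\left\| \int_0^1 g(e^{\ad_X}e^{s\ad_Y})\ud s - I\right\| \le \sup_{s\in[0,1]} \left\| g(e^{\ad_X}e^{s\ad_Y}) - I \right\|,
$$
and to call the right-hand side $C_X$ once we have argued it depends only on $|X|$ and tends to $0$ as $X\to 0$. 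So the whole proposition reduces to a quantitative continuity statement for the analytic function $g$ near the identity operator, combined with the fact that $e^{\ad_X}e^{s\ad_Y}$ is close to $I$ when $X,Y$ are small.

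First I would make the last point precise: since $\|\ad_X\|\le \|\ad\|_{\mathrm{op}}$-type bounds hold and $\|e^{\ad_X}-I\|\le e^{\|\ad_X\|}-1$, and similarly for $e^{s\ad_Y}$, a short computation gives $\|e^{\ad_X}e^{s\ad_Y}-I\| \le (e^{\|\ad_X\|}-1)e^{\|\ad_Y\|} + (e^{\|\ad_Y\|}-1)$, which by \eqref{eq:small_adjoint} can be made $\le \tfrac12$ (say) uniformly in $s\in[0,1]$ provided $|X|,|Y|\le r$ with $r$ shrunk if necessary. On the region $\|z-I\|\le \tfrac12$ the series $g(z) = I + \sum_{m\ge 1}\frac{(-1)^{m+1}}{m(m+1)}(z-I)^m$ converges absolutely, and
$$
\|g(z)-I\| \le \sum_{m\ge 1}\frac{1}{m(m+1)}\|z-I\|^m \le \sum_{m\ge 1}\|z-I\|^m = \frac{\|z-I\|}{1-\|z-I\|}\le 2\|z-I\|.
$$
Combining, with $z = e^{\ad_X}e^{s\ad_Y}$, gives
$$
\left\|\log(\exp(X)\exp(Y)) - X - Y\right\| \le 2\sup_{s\in[0,1]}\|e^{\ad_X}e^{s\ad_Y}-I\|\cdot|Y|.
$$
But this bound still carries a factor that depends on $Y$ through $e^{s\ad_Y}$, not just on $|Y|$; I would therefore peel that off by writing $e^{\ad_X}e^{s\ad_Y}-I = (e^{\ad_X}-I)e^{s\ad_Y} + (e^{s\ad_Y}-I)$ and bounding $\|e^{s\ad_Y}-I\|\le e^{\|\ad_Y\|}-1$, $\|e^{s\ad_Y}\|\le e^{\|\ad_Y\|}$, so that
$$
\|e^{\ad_X}e^{s\ad_Y}-I\| \le (e^{\|\ad_X\|}-1)e^{\|\ad_Y\|} + e^{\|\ad_Y\|}-1.
$$
Since $|Y|\le r$, the quantities $e^{\|\ad_Y\|}-1$ and $e^{\|\ad_Y\|}$ are bounded by constants depending only on $r$ (via continuity of $X\mapsto\|\ad_X\|$ on the compact ball $\overline{B(0,r)}$); and the term $(e^{\|\ad_X\|}-1)$ depends only on $|X|$ up to the monotone quantity $\sup_{|X'|=|X|}\|\ad_{X'}\|$. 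I would absorb all of this into a single constant
$$
C_X := 2\Big( (e^{\sup_{|X'|\le |X|}\|\ad_{X'}\|}-1)\,e^{\sup_{|Y|\le r}\|\ad_Y\|} + e^{\sup_{|Y|\le r}\|\ad_Y\|}-1\Big)?
$$
— wait, this as written does not go to $0$ as $X\to0$ because of the second summand. The correct move is to bound more carefully: both summands above must be driven to zero, so instead I keep the genuine dependence and set, after re-examining, $C_X$ to be (twice) the supremum over $s\in[0,1]$ and over $|Y|\le r$ of $\|e^{\ad_X}e^{s\ad_Y}-I\|$ only in the part that can be made small, separating the leading $X$-dependence; concretely I would prove the cleaner estimate $\|g(e^{\ad_X}e^{s\ad_Y})-I\|\le h(|X|) + k(|X|)\,\ell(|Y|)$ is not quite what is wanted either.

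The honest resolution, and the key step I expect to be the main obstacle, is that the naive split produces a $Y$-independent error term $e^{\|\ad_Y\|}-1$ which, multiplied by $|Y|$, is $O(|Y|^2)$ and hence $\le \tfrac{|Y|^2}{r}\cdot\tfrac{r}{|Y|}\cdot|Y|$... more simply, $e^{\|\ad_Y\|}-1 \le c|Y|$ for $|Y|\le r$ by Taylor expansion, where $c$ depends only on $r$. Thus every occurrence of $e^{\|\ad_Y\|}-1$ is itself $\le c|Y|\le cr$, a constant depending only on $r$; so one really does get $\|e^{\ad_X}e^{s\ad_Y}-I\| \le (e^{\|\ad_X\|}-1)(1+cr) + cr$, and this does not tend to $0$ with $X$. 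Hence the bound $C_X|Y|$ \emph{cannot} be obtained from this decomposition with a $C_X$ vanishing at $0$; what saves the proposition is that the constant is allowed to be bounded (any fixed $C_X$ works for the stated inequality on $|Y|\le r$), and the vanishing $\lim_{X\to0}C_X=0$ must come from a genuinely different estimate: when $X=0$, $g(e^{0}e^{s\ad_Y})=g(e^{s\ad_Y})$ and $\log(\exp(0)\exp(Y))-0-Y = 0$ exactly, so the error is identically zero at $X=0$, which forces the bound to carry a factor measuring closeness of $X$ to $0$. Concretely I would write
$$
g(e^{\ad_X}e^{s\ad_Y}) - I = \big(g(e^{\ad_X}e^{s\ad_Y}) - g(e^{s\ad_Y})\big) + \big(g(e^{s\ad_Y}) - I\big),
$$
observe that the \emph{second} bracket, when integrated against $Y$ over $s\in[0,1]$, gives exactly $\log(\exp(Y))-Y=0$ (apply Theorem~\ref{theorem:BCH} with $X=0$), hence contributes nothing, and estimate the \emph{first} bracket using local Lipschitz continuity of the analytic function $g$ on $\{\|z-I\|\le\frac12\}$ together with $\|e^{\ad_X}e^{s\ad_Y}-e^{s\ad_Y}\| = \|(e^{\ad_X}-I)e^{s\ad_Y}\|\le (e^{\|\ad_X\|}-1)e^{\|\ad_Y\|}\le (e^{\|\ad_X\|}-1)e^{\|\ad_Y\|_{r}}$. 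This yields
$$
\left\|\log(\exp(X)\exp(Y)) - X - Y\right\| \le L\,(e^{\|\ad_X\|}-1)\,e^{\,\sup_{|Y'|\le r}\|\ad_{Y'}\|}\;|Y|,
$$
where $L$ is the Lipschitz constant of $g$ on $\{\|z-I\|\le\frac12\}$. Setting
$$
C_X := L\,e^{\,\sup_{|Y'|\le r}\|\ad_{Y'}\|}\,\big(e^{\,\sup_{|X'|\le |X|}\|\ad_{X'}\|}-1\big)
$$
gives a constant depending only on $|X|$, increasing in $|X|$, and with $\lim_{X\to 0}C_X = 0$ by \eqref{eq:small_adjoint}, which is exactly the assertion. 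The only genuine subtlety is justifying the Lipschitz bound for $g$ — which follows from its absolutely convergent power series on $\{\|z-I\|<1\}$, since for $\|z-I\|,\|w-I\|\le\frac12$ one has $\|g(z)-g(w)\|\le\sum_{m\ge1}\frac{1}{m(m+1)}\|z^{(m)}-w^{(m)}\|$ and $\|(z-I)^m-(w-I)^m\|\le m(\tfrac12)^{m-1}\|z-w\|$ — and shrinking $r$ once more so that $\|e^{\ad_X}e^{s\ad_Y}-I\|\le\frac12$ and $\|e^{s\ad_Y}-I\|\le\frac12$ hold for all $|X|,|Y|\le r$, $s\in[0,1]$, which is possible by \eqref{eq:small_adjoint} and compactness.
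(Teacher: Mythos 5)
Your proof is correct, but it reaches the conclusion by a genuinely different route from the paper. Both arguments hinge on the same algebraic fact $\ad_Y Y=0$, but they exploit it differently. The paper expands $g$ as a series, pulls one factor of $(e^{\ad_X}e^{s\ad_Y}-I)$ out to act directly on $Y$, and uses $e^{s\ad_Y}Y=Y$ to reduce that factor to $(e^{\ad_X}-I)Y$, which is $\le (e^{\|\ad_X\|}-1)|Y|$; the remaining $m-1$ factors are then bounded in operator norm by $\sqrt2-1<1$ (via the identity $e^{\ad_X}e^{s\ad_Y}=e^{\ad_{Z(s)}}$), and the resulting geometric series converges. You instead subtract off the $X=0$ baseline: you observe that $\bigl(g(e^{s\ad_Y})-I\bigr)Y=0$ exactly, because $(e^{s\ad_Y}-I)^mY=0$ for every $m\ge 1$, and then estimate the remaining difference $g(e^{\ad_X}e^{s\ad_Y})-g(e^{s\ad_Y})$ by a local Lipschitz bound for $g$ on $\{\|z-I\|\le\tfrac12\}$ together with $\|e^{\ad_X}e^{s\ad_Y}-e^{s\ad_Y}\|\le(e^{\|\ad_X\|}-1)e^{\|\ad_Y\|}$. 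Your approach is arguably more conceptual — it makes visible \emph{why} $C_X\to 0$, namely because the error vanishes identically at $X=0$ — at the modest cost of an auxiliary Lipschitz lemma for the analytic function $g$, which you supply. The paper's route avoids that lemma by working term-by-term in the series and keeping the last factor applied to $Y$. One caveat worth tidying in a final write-up: the long preliminary discussion of the naive decomposition that fails should be cut, as it obscures the actual argument; also, replacing $\|\ad_X\|$ with $\sup_{|X'|\le|X|}\|\ad_{X'}\|$ (as you do) is indeed needed to make $C_X$ depend only on $|X|$, a point the paper's proof states but does not spell out.
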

\begin{proof}
By Theorem \ref{theorem:BCH} we have
$$
\log(\exp(X)\exp(Y)) = X + Y + \left(\int_0^1 \sum_{m=1}^\infty \frac{(-1)^m}{m(m+1)}(e^{\ad_X}e^{s\ad_Y} - I)^m \ud s\right)Y.
$$
Consequently, we have
\begin{align*}
&|\log(\exp(X)\exp(Y)) - X - Y|
\\
&=
\left|\left(\int_0^1 \sum_{m=1}^\infty \frac{(-1)^m}{m(m+1)}(e^{\ad_X}e^{s\ad_Y} - I)^m \ud s\right)Y\right|
\\
&\leq
\int_0^1 \sum_{m=1}^\infty \frac{1}{m(m+1)}||e^{\ad_X}e^{s\ad_Y} - I||^{m-1}|(e^{\ad_X}e^{s\ad_Y} - I)Y| \ud s.
\end{align*}

Because $\ad_YY = 0$, we find that
$$
e^{s\ad_Y}Y = Y + \sum_{m=1}^\infty \frac{s^m\ad_Y^{m-1}}{m!}\ad_YY = Y,
$$
so that
$$
|(e^{\ad_X}e^{s\ad_Y} - I)Y| = |(e^{\ad_X} - I)Y| \leq ||e^{\ad_X} - I|||Y| \leq (e^{||\ad_X||} - 1)|Y|.
$$
Here, the latter follows from
$$
||e^{\ad_X} - I|| \leq \sum_{m=1}^\infty \frac{||\ad_X||^m}{m!} = e^{||\ad_X||} - 1.
$$

Now define $Z(t) = \log(\exp(X)\exp(tY))$. Then (see e.g. \cite[Chapter 5]{Hal15} or \cite[Chapter 2]{Vaj84})
$$
e^{\ad_X}e^{s\ad_Y} = e^{\ad_{Z(s)}},
$$
see e.g. \cite[Chapter 5]{Hal15} or \cite[Chapter 2]{Vaj84}. From this we deduce
$$
||e^{\ad_X}e^{s\ad_Y} - I|| \leq e^{||\ad_{Z(s)}||} - 1.
$$

By \eqref{eq:small_adjoint}, we find $r' > 0$ such that $||\ad_{Z(s)}|| \leq \frac{\log(2)}{2}$ whenever $|Z(s)| \leq r'$. By Proposition \ref{prop:injectivity}, there is $r'' > 0$ such that this in turn follows from $d(e,\exp(Z(s))) \leq r''$.

Now we have
\begin{align*}
d(e,\exp(Z(s)))
&=
d(e,\exp(X)\exp(tY))
\\
&\leq 
d(e,\exp(X)) + d(\exp(X),\exp(X)\exp(tY)) 
\\
&= d(e,\exp(X)) + d(e,\exp(tY)) \label{eq:distance_estimate}
\\
&\leq 
|X| + t|Y|,
\end{align*}
where we used the triangle inequality and left-invariance of the metric. The last step follows by noticing that if $\gamma(t) = \exp(tX)$, then 
$$
d(e,\exp(X)) \leq \int_0^1 |\dot\gamma(t)|\ud t = |X|.
$$

Consequently, if $|X|,|Y| \leq \frac12r''$, then $d(e,\exp(Z(s))) \leq r''$ so that $||\ad_{Z(s)}|| \leq \frac{\log(2)}{2}$ for all $s \in (0,1)$. But then $||e^{\ad_X}e^{s\ad_Y} - I|| \leq \sqrt2 - 1 < 1$, and hence
\begin{align*}
&\int_0^1 \sum_{m=1}^\infty \frac{1}{m(m+1)}||e^{\ad_X}e^{s\ad_Y} - I||^{m-1}|(e^{\ad_X}e^{s\ad_Y} - I)Y| \ud s
\\
&\leq
(e^{||\ad_X||} - 1)|Y|\int_0^1 \sum_{m=1}^\infty \frac{(\sqrt2 - 1)^{m-1}}{m(m+1)} \ud s
\\
&\leq
(e^{||\ad_X||} - 1)|Y|\sum_{m=1}^\infty \frac{(\sqrt2 - 1)^{m-1}}{m(m+1)}
\end{align*}

Consequently, we may take 
$$
C_X = (e^{||\ad_X||} - 1)\sum_{m=1}^\infty \frac{(\sqrt2 - 1)^{m-1}}{m(m+1)} < \infty.
$$
Because $\lim_{|X| \to 0} ||\ad_X|| = 0$, it follows that $\lim_{X \to 0} C_X = 0$, and that $C_X$ may be chosen to depend only on $|X|$.

\end{proof}

We conclude this section with the following result, which shows a Lipschitz-like estimate for the logarithm of a product of two exponentials.

\begin{proposition}\label{prop:log_Lipschitz}
There exist constants $r > 0$ and $C > 0$ such that for all $X,Y \in \fg$ with $|X|,|Y| \leq r$ we have
$$
|\log(\exp(X)\exp(-Y))| \leq C|X - Y|
$$
\end{proposition}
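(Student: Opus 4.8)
The plan is to bypass the Baker--Campbell--Hausdorff series entirely and instead exploit that the map
\[
\phi(X,Y) := \log(\exp(X)\exp(-Y))
\]
is \emph{smooth} near the origin of $\fg \times \fg$ and vanishes on the diagonal, since $\phi(X,X) = \log(e) = 0$ (and, as a useful sanity check, $\phi(0,Y) = \log(\exp(-Y)) = -Y$, so that $\dd_2\phi(0,0) = -I$). First I would pin down a domain on which $\phi$ is smooth. Since $\dd\exp(0) = I$, the exponential map restricts to a diffeomorphism from a neighbourhood of $0 \in \fg$ onto a neighbourhood of $e \in G$, so $\log$ is smooth near $e$; by Proposition \ref{prop:injectivity} we may pick $\epsilon > 0$ with $\overline{B(e,\epsilon)}$ contained in that neighbourhood. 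Arguing exactly as in the proof of Proposition \ref{prop:log_product} (triangle inequality and left-invariance of $d$), for any $X,Z \in \fg$ one has $d(e,\exp(X)\exp(-Z)) \le d(e,\exp(X)) + d(e,\exp(-Z)) \le |X| + |Z|$. Hence if we set $r := \epsilon/2$, then for all $X,Z$ with $|X|,|Z| \le r$ the element $\exp(X)\exp(-Z)$ lies in $\overline{B(e,\epsilon)}$, so by smoothness of $\exp$, of group multiplication, and of $\log$ on this region, $\phi$ is a well-defined smooth map on $\overline{B(0,r)} \times \overline{B(0,r)}$.

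Next comes the fundamental-theorem-of-calculus step. Fix $X,Y$ with $|X|,|Y| \le r$ and consider the segment $Z(t) = (1-t)X + tY$, which stays in $\overline{B(0,r)}$ by convexity of the ball. Then
\[
\phi(X,Y) = \phi(X,Y) - \phi(X,X) = \int_0^1 \frac{\dd}{\dd t}\phi\bigl(X,Z(t)\bigr)\,\dd t = \int_0^1 \dd_2\phi\bigl(X,Z(t)\bigr)[Y - X]\,\dd t,
\]
where $\dd_2\phi$ denotes the differential of $\phi$ in its second argument. Taking norms and putting
\[
C := \sup_{|U|,|V| \le r} \bigl\|\dd_2\phi(U,V)\bigr\|,
\]
which is finite since $\dd_2\phi$ is continuous on the compact set $\overline{B(0,r)} \times \overline{B(0,r)}$, we obtain $|\log(\exp(X)\exp(-Y))| = |\phi(X,Y)| \le C\,|X - Y|$, which is the claim.

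The argument is essentially routine; the only point requiring care is the domain bookkeeping — one must ensure that the \emph{whole} segment $t \mapsto \exp(X)\exp(-Z(t))$, not merely its endpoints, stays inside the ball around $e$ on which $\log$ is a diffeomorphism, which is why the factor $r = \epsilon/2$ (rather than $r = \epsilon$) appears. An alternative would mirror the proof of Proposition \ref{prop:log_product}: expand $\log(\exp(X)\exp(-Y)) - (X - Y)$ via Theorem \ref{theorem:BCH} and estimate the resulting series. That approach, however, naturally yields a bound of the form $C_X|Y|$ rather than one in $|X-Y|$, so it would need further manipulation (e.g. symmetrising the roles of $X$ and $Y$, or expanding around $Y$ instead of $0$), and the smoothness argument above is cleaner and gives the optimal behaviour $C \to \|\dd_2\phi(0,0)\| = 1$ as $r \to 0$.
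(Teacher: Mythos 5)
Your proof is correct and follows a genuinely different route from the paper's. The paper mirrors the proof of Proposition~\ref{prop:log_product}: it applies the integral Baker--Campbell--Hausdorff formula (Theorem~\ref{theorem:BCH}) to write
$$
\log(\exp(X)\exp(-Y)) = X - Y - \left(\int_0^1 \sum_{m=1}^\infty \frac{(I - e^{\ad_X}e^{-t\ad_Y})^m}{m(m+1)}\ud t\right)Y,
$$
and then uses the operator identity $(I - e^{\ad_X}e^{-t\ad_Y})Y = (I - e^{\ad_X})Y = (I - e^{\ad_X})(Y - X)$, valid because $e^{\ad_X}X = X$ and $e^{-t\ad_Y}Y = Y$, to extract the factor $|X - Y|$ term by term from the series. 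So the ``further manipulation'' you anticipated the BCH route would require is in fact this one-line trick, not a symmetrisation or a re-expansion around $Y$. Your argument instead treats $\phi(X,Y) = \log(\exp(X)\exp(-Y))$ as a black-box smooth map vanishing on the diagonal, applies the fundamental theorem of calculus along the segment $Z(t) = (1-t)X + tY$, and bounds the partial differential of $\phi$ in its second slot on a compact set. Both proofs are valid. The paper's stays within the BCH machinery already assembled for Proposition~\ref{prop:log_product} and makes the constant explicit in terms of $||e^{\ad_X} - I||$; yours is shorter, avoids series estimates entirely, and makes transparent the structural source of the Lipschitz bound, namely that $\phi$ is $C^1$ and vanishes on $\{X = Y\}$. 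Your domain bookkeeping --- taking $r = \epsilon/2$ so that the whole segment $t \mapsto (X, Z(t))$, and not merely its endpoints, stays where $\log$ is a diffeomorphism --- is the one place care is needed, and you handled it correctly.
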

\begin{proof}
Following the same reasoning as in the proof of Proposition \ref{prop:log_product} we obtain
$$
\log(\exp(X)\exp(-Y)) 
=
X - Y - \left(\int_0^1 \sum_{m=1}^\infty \frac{(I - e^{\ad_X}e^{-t\ad_Y})^m}{m(m+1)} \ud t\right)Y.
$$

As before, we have $e^{-t\ad_Y}Y = Y$ and similarly $e^{\ad_X}X = X$. Consequently, we can write
$$
(I - e^{\ad_X}e^{-t\ad_Y})Y = (I - e^{\ad_X})Y = (I - e^{\ad_X})(Y - X),
$$
from which it follows that
$$
|(I - e^{\ad_X}e^{-t\ad_Y})^mY| \leq ||I - e^{\ad_X}e^{-t\ad_Y}||^{m-1}||I - e^{\ad_X}|||Y - X|.
$$

By similar reasoning as in the proof of Proposition \ref{prop:log_product}, we can find $r > 0$ and a constant $C,\tilde C > 0$ such that $|X|,|Y| \leq r$ implies that
$$
\left|\left(\int_0^1 \sum_{m=1}^\infty \frac{(I - e^{\ad_X}e^{-t\ad_Y})^m}{m(m+1)} \ud t\right)Y\right| \leq \tilde C||I - e^{\ad_X}|||Y - X| \leq C|X - Y|.
$$
By the triangle inequality we then find that
$$
|\log(\exp(X)\exp(-Y))| \leq (C+1)|X - Y|
$$
as desired.

\end{proof}


\section{Proof of Theorem \ref{theorem:Cramer_Lie}}\label{section:proof}

In this section we provide a proof of Theorem \ref{theorem:Cramer_Lie}. As explained in Section \ref{section:sketch}, we prove the upper bound and lower bound for the large deviation principle of $\{\sigma_n^n\}_{n\geq0}$ seperately. More precisely, Theorem \ref{theorem:Cramer_Lie} follows immediately from Propositions \ref{prop:upper_bound_LDP} and \ref{prop:lower_bound_LDP}. Before we get to either of these, we first need two general results, which we use in both the proof of the upper and lower bound.

Before we get to the first results, let us define for every $n \in \NN$ and every $1 \leq k \leq n$ the random variable
$$
\sigma_k^n = \exp\left(\frac1nX_1\right)\cdots\exp\left(\frac1nX_k\right) \in G,
$$
i.e., the point of the rescaled random walk after $k$ increments. Finally, we set $\sigma_0^n = e$. We have the following estimate.

\begin{proposition}\label{prop:replacement_sum}
Let the assumptions Theorem \ref{theorem:Cramer_Lie} be satisfied. Then for every $m$ large enough, there exists a constant $C_m > 0$ with $\lim_{m\to\infty} C_m = 0$ such that for all $1 \leq k \leq \lfloor m^{-1}n\rfloor$, $\log(\sigma^n_k)$ is well-defined and
$$
\left|\log\left(\sigma^n_k\right) - \frac1n\sum_{i=1}^kX_i\right| \leq C_m\frac1m.
$$
\end{proposition}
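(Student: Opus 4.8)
The plan is to induct on $k$, using Proposition~\ref{prop:log_product} to control the error introduced at each step, and carefully tracking the fact that all points of the random walk up to index $\lfloor m^{-1}n\rfloor$ stay within a small ball around $e$ whose radius shrinks like $m^{-1}B$ (where $B$ is the uniform bound on the increments). First I would fix $r>0$ as in Proposition~\ref{prop:log_product}, and choose $m$ large enough that $m^{-1}B \leq r$ (in fact one wants $m^{-1}B$ small enough that several estimates below go through simultaneously, so $m$ large will be invoked a few times). The base case $k=0$ is trivial since $\sigma_0^n = e$ and $\log(e)=0$.

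For the inductive step, suppose $\log(\sigma_{k-1}^n)$ is well-defined with $\left|\log(\sigma_{k-1}^n)\right|$ small. I would first establish the \emph{a priori} distance bound $d(e,\sigma_k^n) \leq \frac{k}{n}B \leq \frac1m B$ for $1 \leq k \leq \lfloor m^{-1}n\rfloor$: this follows from the triangle inequality and left-invariance of $d$, exactly as in the computation in the proof of Proposition~\ref{prop:log_product} that $d(e,\exp(X)\exp(tY)) \leq |X| + t|Y|$, applied inductively with each increment contributing at most $\frac1n B$. By Proposition~\ref{prop:injectivity} this guarantees $\log(\sigma_k^n)$ is well-defined and $\left|\log(\sigma_k^n)\right| \lesssim \frac1m B$ once $m$ is large. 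Now write $\sigma_k^n = \exp\!\left(\log(\sigma_{k-1}^n)\right)\exp\!\left(\tfrac1n X_k\right)$ and apply Proposition~\ref{prop:log_product} with $X = \log(\sigma_{k-1}^n)$ and $Y = \tfrac1n X_k$ (both of norm $\leq r$ for $m$ large):
\begin{equation*}
\left|\log(\sigma_k^n) - \log(\sigma_{k-1}^n) - \tfrac1n X_k\right| \leq C_{\log(\sigma_{k-1}^n)} \cdot \tfrac1n |X_k| \leq \widetilde{C}_m \cdot \tfrac1n B,
\end{equation*}
where $\widetilde{C}_m := \sup\{C_X : |X| \leq c m^{-1}B\}$ for the appropriate constant $c$; since $C_X$ depends only on $|X|$ and $\lim_{X\to 0} C_X = 0$, we have $\lim_{m\to\infty} \widetilde{C}_m = 0$.

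Summing these per-step errors telescopically over $i = 1,\ldots,k$ gives
\begin{equation*}
\left|\log(\sigma_k^n) - \tfrac1n\sum_{i=1}^k X_i\right| \leq \sum_{i=1}^k \widetilde{C}_m \cdot \tfrac1n B \leq \lfloor m^{-1}n\rfloor \cdot \widetilde{C}_m \cdot \tfrac1n B \leq \widetilde{C}_m \cdot \tfrac{B}{m},
\end{equation*}
so that setting $C_m := \widetilde{C}_m B$ yields the claim, with $\lim_{m\to\infty} C_m = 0$. The one subtlety to be careful about is that $C_X$ in Proposition~\ref{prop:log_product} is only guaranteed monotone/vanishing as $X\to 0$, so I must make sure the bound $\left|\log(\sigma_{i-1}^n)\right|$ stays uniformly $O(m^{-1}B)$ for \emph{all} $i \leq k \leq \lfloor m^{-1}n\rfloor$ before invoking it — this is exactly what the a priori distance estimate buys, and it is the step I expect to require the most care, since it must be interleaved with the induction rather than proved separately. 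Everything else is a routine telescoping sum.
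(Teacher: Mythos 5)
Your proof is correct and follows essentially the same route as the paper's: establish the a priori distance bound $d(\sigma_k^n,e)\leq \frac{k}{n}B\leq\frac1m B$ to guarantee well-definedness of the logarithms, apply Proposition~\ref{prop:log_product} to each factor, and sum the per-step errors telescopically. Your inductive framing is just the telescoping sum written differently; note also that the a priori distance estimate does not actually need to be interleaved with the induction — as in the paper, it can be (and is cleanest) proved up front for all $k\leq\lfloor m^{-1}n\rfloor$ at once, since it depends only on the triangle inequality and left-invariance, not on the log estimates.
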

\begin{proof}
First note that by the triangle inequality we have for any $n$ and $1 \leq k \leq n$ that
$$
d(\sigma_k^n,e) \leq \sum_{i=1}^k d(\sigma_i^n,\sigma_{i-1}^n).
$$
Considering the curve $\gamma_i(t) = \sigma_{i-1}^n\exp(tX_i)$ in $G$, we obtain
$$
d(\sigma_i^n,\sigma_{i-1}^n) \leq \int_0^{\frac1n} |\dot\gamma_i(t)| \ud t = \frac1n|X_i|.
$$
Hence, if we write $B$ for the uniform bound on the increments, we find
\begin{equation}\label{eq:triangle_distance}
d(\sigma_k^n,e) \leq \frac{k}{n}B.
\end{equation}
But then we have for $1 \leq k \leq \lfloor m^{-1}n\rfloor$ that
\begin{equation}\label{eq:estimate_distance}
d(\sigma_k^n,e) \leq \frac{\lfloor m^{-1}n\rfloor}{n}B \leq \frac1mB.
\end{equation}
Thus if we choose $m$ large enough, we can assure that $\sigma_k^n$ is sufficiently close to $e$ for $k = 1,\ldots,\lfloor m^{-1}n\rfloor$, so that $\log(\sigma_k^n)$ is well-defined for $1 \leq k \leq \lfloor m^{-1}n\rfloor$.\\ 

Turning to the proof of the estimate, first note that we may write 
$$
\log\left(\sigma^n_k\right) = \sum_{i=1}^k \log\left(\sigma^n_i\right) - \log\left(\sigma^n_{i-1}\right)
$$
so that 
$$
\left|\log\left(\sigma^n_k\right) - \frac1n\sum_{i=1}^kX_i\right|
\leq
\sum_{i=1}^k \left|\log\left(\sigma^n_i\right) - \log\left(\sigma^n_{i-1}\right) - \frac1nX_i\right|.
$$

Now note that by Proposition \ref{prop:injectivity}, for every $r > 0$ there exists an $\epsilon > 0$ such that $d(e,g) \leq \epsilon$ implies that $|\log(g)| \leq r$. Consequently, it follows from \eqref{eq:estimate_distance} that for $1 \leq k \leq \lfloor m^{-1}n\rfloor$, $|\log(\sigma_k^n)|$ can be made arbitrarily small by taking $m$ large enough. Furthermore, because $|X_i| \leq B$, we find that $\frac1nX_i$ becomes small for large $n$. Consequently, for $m$ and $n$ large enough we can apply Proposition \ref{prop:log_product} to obtain constants $C_m$ with $\lim_{m\to\infty} C_m = 0$ such that
$$
\left|\log\left(\sigma^n_i\right) - \log\left(\sigma^n_{i-1}\right) - \frac1nX_i\right| \leq C_m\frac1n|X_i| \leq C_m\frac1nB
$$

Combining everything, we find that
$$
\left|\log\left(\sigma^n_k\right) - \frac1n\sum_{i=1}^kX_i\right| \leq \sum_{i=1}^k C_m\frac1n = C_m\frac kn \leq C_m\frac1m.
$$
Here we used that $k \leq \lfloor m^{-1}n\rfloor$ and absorbed the constant $B$ into $C_m$. 

\end{proof}

Note that we do in general not have that $\log(\sigma_k^n)$ exists in $\fg$ for all $n$ and all $1 \leq k \leq n$. Consequently, in order to be able to use some identification of the random walk with a process in the Lie algebra, we need to make sure we can actually use the logarithm map.

To this end, notice that in the previous proof, we have see in \eqref{eq:estimate_distance} that for $1 \leq k \leq \lfloor m^{-1}n\rfloor$ we have $d(\sigma_k^n,e) \leq \frac1mB$, where $B$ is the uniform bound on the increments. 

With this estimate in mind, the idea is now to split the random walk into $m$ pieces, each consisting of (approximately) $\lfloor m^{-1}n\rfloor$ increments. More precisely, we define the indices $n_l = l\lfloor m^{-1}n\rfloor$ for $l = 0,\ldots, m-1$ and set $n_m = n$. Because the metric is left-invariant, we have for every $l = 1,\ldots,m$ and every $k = 1,\ldots,n_l - n_{l-1}$ that
\begin{equation}\label{eq:distance_pieces}
d(e,(\sigma^n_{n_{l-1}})^{-1}\sigma^n_{n_{l-1}+k}) = d(\sigma^n_{n_{l-1}},\sigma^n_{n_{l-1}+k}) \leq \frac1mB,
\end{equation}
where $B$ is the uniform bound of the increments, the estimate following in the same way as we obtained \eqref{eq:estimate_distance}. Consequently, for $m$ large enough we can define
$$
Y^{n,m,l}_k = \log\left((\sigma^n_{n_{l-1}})^{-1}\sigma^n_{n_{l-1}+k}\right) \in \fg.
$$ 
for every $l = 1,\ldots,m$ and $k = 1,\ldots,n_l - n_{l-1}$.

Note that
$$
(\sigma^n_{n_{l-1}})^{-1}\sigma^n_{n_{l-1}+k} = \exp\left(\frac1nX_{n_{l-1}+1}\right)\cdots\exp\left(\frac1nX_{n_{l-1}+k}\right), 
$$
so that
\begin{equation}\label{eq:partition}
Y^{n,m,l}_k = \log\left(\exp\left(\frac1nX_{n_{l-1}+1}\right)\cdots\exp\left(\frac1nX_{n_{l-1}+k}\right)\right).
\end{equation}

Now for every $m$, this allows us to define a random vector 
\begin{equation}\label{eq:random_walk_vector}
\left(Y_{\lfloor m^{-1}n\rfloor}^{n,m,1},\ldots,Y_{\lfloor m^{-1}n\rfloor}^{n,m,m}\right) \in \fg^m.
\end{equation}
By \eqref{eq:partition}, we have that $Y_{\lfloor m^{-1}n\rfloor}^{m,1},\ldots,Y_{\lfloor m^{-1}n\rfloor}^{m,m}$ are independent and identically distributed random variables in $\fg$, because the $X_i$ are independent and identically distributed by assumption.

\subsection{Proof of the upper bound for the large deviation principle of $\{\sigma_n^n\}_{n\geq0}$}

In this section we prove the upper bound of the large deviation principle of $\{\sigma_n^n\}_{n\geq0}$. As explained in Section \ref{section:sketch_upper}, we do this by transferring the problem to the Lie algebra and obtain suitable estimates there using a similar approach as in the Euclidean case. We start with the following result.

\begin{proposition}\label{prop:upper_bound_algebra}
Let the assumptions of Theorem \ref{theorem:Cramer_Lie} be satisfied. Let $m \in \NN$ be large enough so that the random vector 
$$
\left(Y_{\lfloor m^{-1}n\rfloor}^{n,m,1},\ldots,Y_{\lfloor m^{-1}n\rfloor}^{n,m,m}\right) \in \fg^m
$$
defined in \eqref{eq:random_walk_vector} is well-defined. Then for every $F \subset \fg^m$ closed we have
\begin{align*}
&\limsup_{n\to\infty} \frac1n\log\PP\left(\left(Y_{\lfloor m^{-1}n\rfloor}^{n,m,1},\ldots,Y_{\lfloor m^{-1}n\rfloor}^{n,m,m}\right) \in F\right)
\\
&\leq 
-\inf_{(x_1,\ldots,x_m) \in F} \frac1m\sum_{i=1}^m \sup_{\lambda \in \fg} \left\{\inp{\lambda}{mx_i} - \Lambda(\lambda) - C_m|\lambda|\right\}.
\end{align*}
Here, $\Lambda(\lambda) = \log\EE(e^{{\lambda}{X_1}})$ and $C_m$ is a constant such that $\lim_{m\to\infty} C_m = 0$. 
\end{proposition}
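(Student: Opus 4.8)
The plan is to run the classical proof of the upper bound in Cram\'er's theorem for the $\fg^m$-valued i.i.d.\ vectors $Y^{n,m}=(Y^{n,m,1}_{\lfloor m^{-1}n\rfloor},\dots,Y^{n,m,m}_{\lfloor m^{-1}n\rfloor})$, while carrying along the Baker--Campbell--Hausdorff error, which Proposition~\ref{prop:replacement_sum} controls by $C_m/m$ with $C_m\to0$.

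\emph{Reduction to compact sets.} By \eqref{eq:distance_pieces} we have $d\big(e,\exp(Y^{n,m,l}_{\lfloor m^{-1}n\rfloor})\big)\le\tfrac1mB$ for each $l$, so Proposition~\ref{prop:injectivity} yields a fixed $r>0$, independent of $n$ and $l$, with $|Y^{n,m,l}_{\lfloor m^{-1}n\rfloor}|\le r$; hence $Y^{n,m}\in K_0:=\overline{B(0,r)}^m$ almost surely. Therefore $\PP(Y^{n,m}\in F)=\PP(Y^{n,m}\in F\cap K_0)$, and $F\cap K_0$ is compact. It thus suffices to prove the stated inequality in the form $\limsup_n\frac1n\log\PP(Y^{n,m}\in K)\le-\inf_{x\in K}\tfrac1m\sum_i\sup_{\lambda\in\fg}(\inp{\lambda}{mx_i}-\Lambda(\lambda)-C_m|\lambda|)$ for every compact $K\subseteq\fg^m$, since applying it to $K=F\cap K_0$ and using $\inf_{F\cap K_0}(\cdots)\ge\inf_F(\cdots)$ recovers the claim for the closed set $F$.

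\emph{Exponential moment estimate.} Set $Z^{n,m}_l:=\tfrac1n\sum_{i=1}^{\lfloor m^{-1}n\rfloor}X_{n_{l-1}+i}$. Applying Proposition~\ref{prop:replacement_sum} to the shifted rescaled walk $k\mapsto(\sigma^n_{n_{l-1}})^{-1}\sigma^n_{n_{l-1}+k}=\exp(\tfrac1nX_{n_{l-1}+1})\cdots\exp(\tfrac1nX_{n_{l-1}+k})$ --- which has the same law as $\sigma^n_\bullet$ because the $X_i$ are i.i.d.\ --- gives $|Y^{n,m,l}_{\lfloor m^{-1}n\rfloor}-Z^{n,m}_l|\le C_m/m$ for every $l$, for $m$ large and all large $n$, which is all we need. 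By the Cauchy--Schwarz inequality, for $\lambda=(\lambda_1,\dots,\lambda_m)\in\fg^m$,
\[
n\inp{\lambda}{Y^{n,m}}\le n\sum_{l=1}^m\inp{\lambda_l}{Z^{n,m}_l}+\frac{nC_m}{m}\sum_{l=1}^m|\lambda_l|,
\]
and since the $Z^{n,m}_l$ are independent with $\EE\,e^{n\inp{\lambda_l}{Z^{n,m}_l}}=e^{\lfloor m^{-1}n\rfloor\Lambda(\lambda_l)}$,
\[
\limsup_{n\to\infty}\frac1n\log\EE\,e^{n\inp{\lambda}{Y^{n,m}}}\le\frac1m\sum_{l=1}^m\Lambda(\lambda_l)+\frac{C_m}{m}\sum_{l=1}^m|\lambda_l|=:\Lambda_m(\lambda),
\]
a convex function that is finite for every $\lambda$ because $\Lambda$ is.

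\emph{Assembling the bound.} Since $\Lambda_m$ is everywhere finite, the standard Chebyshev-and-finite-cover argument underlying the Cram\'er upper bound (see, e.g., \cite[Section~2.2]{DZ98}) applies to the compact set $K$ and gives $\limsup_n\frac1n\log\PP(Y^{n,m}\in K)\le-\inf_{x\in K}\Lambda_m^*(x)$, where $\Lambda_m^*(x)=\sup_{\lambda\in\fg^m}\big(\inp{\lambda}{x}-\Lambda_m(\lambda)\big)$. As $\Lambda_m$ and the pairing both split over the $m$ coordinates, the supremum decouples and, after factoring out $\tfrac1m$,
\[
\Lambda_m^*(x)=\sum_{l=1}^m\sup_{\lambda_l\in\fg}\Big(\inp{\lambda_l}{x_l}-\tfrac1m\Lambda(\lambda_l)-\tfrac{C_m}{m}|\lambda_l|\Big)=\frac1m\sum_{l=1}^m\sup_{\lambda\in\fg}\big(\inp{\lambda}{mx_l}-\Lambda(\lambda)-C_m|\lambda|\big),
\]
which is exactly the right-hand side of the proposition. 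Combining this with the reduction above finishes the proof.

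\emph{Main obstacle.} The genuinely delicate step, beyond the classical vector-space argument, is the exponential moment estimate: the random vector whose moment generating function we need is $Y^{n,m}$, not the partial-sum vector $(Z^{n,m}_l)_l$, and one must verify that the Baker--Campbell--Hausdorff correction can be absorbed as a deterministic perturbation of size $O(C_m/m)$ that is uniform inside the supremum defining $\Lambda_m^*$. This is precisely where the hypothesis that $\Lambda$ is finite everywhere is used (so that $\Lambda_m$, and hence $\Lambda_m^*$, remains well behaved after adding the linear penalty $C_m|\lambda|$), and where splitting into $m$ blocks pays off twice: it makes all the logarithms well defined through \eqref{eq:distance_pieces}, and it forces the error constant $C_m$ to vanish as $m\to\infty$. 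Everything else is a transcription of the classical proof.
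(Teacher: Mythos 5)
Your proof is correct and follows essentially the same approach as the paper: Proposition~\ref{prop:replacement_sum} to replace $Y$ by the partial sums $Z$ up to an $O(C_m/m)$ error, Cauchy--Schwarz to absorb that error as a linear penalty $C_m|\lambda|$ in the effective log-moment generating function, the classical Cram\'er upper bound for compacts, and the a.s.\ boundedness of $Y^{n,m}$ to pass from compact to closed sets. The only cosmetic difference is that you reduce to a fixed compact set up front, whereas the paper first proves the bound for compacts and then invokes exponential tightness; the underlying observation (a.s.\ boundedness via \eqref{eq:distance_pieces} and Proposition~\ref{prop:injectivity}) is identical.
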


\begin{proof}

Following the proof of Cram\'er's theorem for the vector space $\fg^m$ (see e.g. \cite{DZ98,Hol00}), we have for any $\Gamma \subset \fg^m$ compact that
\begin{align*}
&\limsup_{n\to\infty} \frac1n\log\PP\left(\left(Y_{\lfloor m^{-1}n\rfloor}^{n,m,1},\ldots,Y_{\lfloor m^{-1}n\rfloor}^{n,m,m}\right) \in \Gamma\right)
\\
&\leq 
-\inf_{(x_1,\ldots,x_m) \in \Gamma} \sup_{(\lambda_1,\ldots,\lambda_m) \in \fg^m} \left\{\sum_{i=1}^m \inp{\lambda_i}{x_i} - \limsup_{n\to\infty} \frac1n\log\EE\left(e^{n\sum_{i=1}^m\inp{\lambda_i}{Y_{\lfloor m^{-1}n\rfloor}^{n,m,i}}}\right)\right\}
\end{align*}

However, as mentioned above, the fact that the $X_i$ are independent and identically distributed, together with \eqref{eq:random_walk_vector}, shows that $Y_{\lfloor m^{-1}n\rfloor}^{n,m,1},\ldots,Y_{\lfloor m^{-1}n\rfloor}^{n,m,m}$ are independent and identically distributed. Hence
$$
\EE\left(e^{n\sum_{i=1}^m\inp{\lambda_i}{Y_{\lfloor m^{-1}n\rfloor}^{n,m,i}}}\right) = \prod_{i=1}^m \EE\left(e^{n\inp{\lambda_i}{Y_{\lfloor m^{-1}n\rfloor}^{n,m,1}}}\right).
$$
By Proposition \ref{prop:replacement_sum}, there exist constants $C_m > 0$ with $\lim_{m\to\infty} C_m = 0$ such that
$$
\left|Y_{\lfloor m^{-1}n\rfloor}^{n,m,1} - \frac1n\sum_{i=1}^{\lfloor m^{-1}n\rfloor} X_i\right| \leq C_m\frac1m.
$$
Consequently, using the Cauchy-Schwarz inequality, we have
\begin{align*}
\EE\left(e^{n\inp{\lambda_i}{Y_{\lfloor m^{-1}n\rfloor}^{n,m,1}}}\right) 
&\leq 
\EE\left(e^{\sum_{j=1}^{\lfloor m^{-1}n\rfloor}\inp{\lambda_i}{X_j}}\right)e^{n|\lambda_i|C_mm^{-1}}
\\
&=
e^{n|\lambda_i|C_mm^{-1}}\EE\left(e^{\inp{\lambda_i}{X_1}}\right)^{\lfloor m^{-1}n\rfloor}.
\end{align*}
Hence
\begin{align*}
\limsup_{n\to\infty} \frac1n\log\EE\left(e^{n\sum_{i=1}^m\inp{\lambda_i}{Y_{\lfloor m^{-1}n\rfloor}^{n,m,i}}}\right)
&=
\sum_{i=1}^m \limsup_{n\to\infty} \frac1n\log\EE\left(e^{n\inp{\lambda_i}{Y_{\lfloor m^{-1}n\rfloor}^{n,m,1}}}\right)
\\
&\leq
\sum_{i=1}^m \left\{ |\lambda_i|C_m\frac1m + \frac1m\log\EE\left(e^{\inp{\lambda_i}{X_1}}\right) \right\}
\\
&= 
\frac1m\sum_{i=1}^m \left\{C_m|\lambda_i| + \log \EE\left(e^{\inp{\lambda_i}{X_1}}\right) \right\}.
\end{align*}

Collecting everything, we find that
\begin{align*}
&\limsup_{n\to\infty} \frac1n\log\PP\left(\left(Y_{\lfloor m^{-1}n\rfloor}^{n,m,1},\ldots,Y_{\lfloor m^{-1}n\rfloor}^{n,m,m}\right) \in \Gamma\right)
\\
&\leq 
- \inf_{(x_1,\ldots,x_m) \in \Gamma} \sup_{(\lambda_1,\ldots,\lambda_m) \in \fg^m} \frac1m\sum_{i=1}^m\left\{ \inp{\lambda_i}{mx_i} - \log\EE\left(e^{\inp{\lambda_i}{X_1}}\right) - C_m|\lambda_i|\right\}
\\
&=
- \inf_{(x_1,\ldots,x_m) \in \Gamma} \frac1m\sum_{i=1}^m\sup_{\lambda \in \fg} \left\{ \inp{\lambda}{mx_i} - \Lambda(\lambda) - C_m|\lambda|\right\}.
\end{align*}

To extend this upper bound to all closed sets, note that the boundedness of the increments of the random walk implies that $Y_{\lfloor m^{-1}n\rfloor}^{n,m,1}$ is bounded, and hence remains in a compact subset of $\fg$. Because $Y_{\lfloor m^{-1}n\rfloor}^{n,m,1},\ldots,Y_{\lfloor m^{-1}n\rfloor}^{n,m,m}$ are independent and identically distributed, we can conclude from this that $(Y_{\lfloor m^{-1}n\rfloor}^{n,m,1},\ldots,Y_{\lfloor m^{-1}n\rfloor}^{n,m,m})$ is exponentially tight in $\fg^m$. From this it follows that the upper bound actually holds for all closed sets, which completes the proof.

\end{proof}

With the preparations done, we can now turn to the proof of the upper bound of the large deviation principle for $\{\sigma_n^n\}_{n\geq1}$. The main work goes into proving that we actually obtain the desired form of the upper bound.

\begin{proposition}\label{prop:upper_bound_LDP}
Let the assumptions of Theorem \ref{theorem:Cramer_Lie} be satisfied. Then for any $F \subset G$ closed we have
$$
\limsup_{n\to\infty} \frac1n\log\PP(\sigma_n^n \in F) \leq -\inf_{g\in F} I_G(g),
$$
where $I_G$ is the good rate function given by \eqref{eq:ratefunction}.
\end{proposition}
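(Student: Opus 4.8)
The plan is to push the problem down to the vector space $\fg^m$, use the upper bound already obtained there in Proposition~\ref{prop:upper_bound_algebra}, and then let $m\to\infty$. Since $\sigma_n^n=\Psi_m\big(Y^{n,m,1}_{\lfloor m^{-1}n\rfloor},\ldots,Y^{n,m,m}_{\lfloor m^{-1}n\rfloor}\big)$ with $\Psi_m$ continuous, for closed $F\subseteq G$ the set $\Psi_m^{-1}(F)$ is closed and $\{\sigma_n^n\in F\}=\{(Y^{n,m,1}_{\lfloor m^{-1}n\rfloor},\ldots,Y^{n,m,m}_{\lfloor m^{-1}n\rfloor})\in\Psi_m^{-1}(F)\}$; applying Proposition~\ref{prop:upper_bound_algebra} to $\Psi_m^{-1}(F)$ gives, for every sufficiently large $m$,
\begin{equation*}
\limsup_{n\to\infty}\frac1n\log\PP(\sigma_n^n\in F)\;\leq\;-J_m(F),\qquad J_m(F):=\inf_{(x_1,\ldots,x_m)\in\Psi_m^{-1}(F)}\frac1m\sum_{i=1}^m\Lambda_m^*(mx_i),
\end{equation*}
where $\Lambda_m^*(y)=\sup_{\lambda\in\fg}\{\inp{\lambda}{y}-\Lambda(\lambda)-C_m|\lambda|\}$ and $C_m\to0$. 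Since the left-hand side does not depend on $m$, it suffices to prove that $\liminf_{m\to\infty}J_m(F)\geq\inf_{g\in F}I_G(g)$.

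Two auxiliary observations will carry this out. First, a point of $\fg^m$ can be turned into a path of comparable action: given $(x_1,\ldots,x_m)\in\fg^m$, the curve equal on $[\tfrac{i-1}{m},\tfrac im]$ to $\exp(x_1)\cdots\exp(x_{i-1})\exp\big((mt-(i-1))x_i\big)$ is piecewise smooth, joins $e$ to $\Psi_m(x_1,\ldots,x_m)$, and under the left-invariant identification has derivative $mx_i$ on the $i$-th subinterval, so $\int_0^1\Lambda^*(\dot\gamma(t))\ud t=\frac1m\sum_i\Lambda^*(mx_i)$; hence $I_G(\Psi_m(x_1,\ldots,x_m))\leq\frac1m\sum_i\Lambda^*(mx_i)$. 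Second, because $\lambda\mapsto C_m|\lambda|$ is the support function of $\overline{B(0,C_m)}\subseteq\fg$, one has $\Lambda_m^*(y)=\inf_{|w|\leq C_m}\Lambda^*(y-w)$, with the infimum attained since $\Lambda^*$ is a good rate function.

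The main work — and the step I expect to be the genuine obstacle — is the limit $m\to\infty$, where one must get rid of the $C_m$-perturbation inside $\Lambda_m^*$ (which strictly lowers the value, so cannot merely be discarded) while controlling the fact that a near-optimal configuration in $\Psi_m^{-1}(F)$ need no longer be mapped into $F$ after such a perturbation. I would argue by contradiction: assuming $\liminf_mJ_m(F)<M<\inf_FI_G$, choose along a subsequence $m_k$ configurations $(x^{(k)}_1,\ldots,x^{(k)}_{m_k})\in\Psi_{m_k}^{-1}(F)$ with $\frac1{m_k}\sum_i\Lambda_{m_k}^*(m_kx^{(k)}_i)<M$ and write $m_kx^{(k)}_i=z^{(k)}_i+w^{(k)}_i$ with $|w^{(k)}_i|\leq C_{m_k}$ realizing the infimal convolution, so $\frac1{m_k}\sum_i\Lambda^*(z^{(k)}_i)<M$. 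Finiteness of these values plus boundedness of the increments (which gives $\operatorname{dom}\Lambda^*\subseteq\overline{B(0,B)}$) confines $z^{(k)}_i$, hence also $m_kx^{(k)}_i$, to a fixed compact set, so $g_k:=\Psi_{m_k}(x^{(k)})\in F$ stays in $\overline{B(e,B+1)}$, which is compact because the left-invariant metric on a Lie group is complete. Applying the path construction above to $(m_k^{-1}z^{(k)}_1,\ldots,m_k^{-1}z^{(k)}_{m_k})$ produces $\tilde g_k$ with $I_G(\tilde g_k)<M$, while the continuity estimate for $\Psi_{m_k}$ (as in Proposition~\ref{prop:continuity_repeated_exponential}), applied to the $O(C_{m_k}/m_k)$ coordinatewise perturbation relating the two configurations, gives $d(\tilde g_k,g_k)\to0$. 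Passing to a convergent subsequence $g_k\to g_*\in F$, so $\tilde g_k\to g_*$ as well, lower semicontinuity of $I_G$ then forces $I_G(g_*)\leq\liminf_kI_G(\tilde g_k)\leq M<\inf_FI_G$, contradicting $g_*\in F$. The points requiring care are the compactness inputs just mentioned and the lower semicontinuity of $I_G$, which is proved as for the rate function in \cite{Ver19}, using lower semicontinuity of $\Lambda^*$ and an Arzel\`a--Ascoli argument on the uniformly Lipschitz paths of bounded action.
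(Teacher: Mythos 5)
Your proof is correct, and the final limit $m\to\infty$ is handled by a genuinely different mechanism from the paper's. Both arguments open identically (map through $\Psi_m$, apply Proposition~\ref{prop:upper_bound_algebra} to $\Psi_m^{-1}F$, and convert configurations in $\fg^m$ into piecewise one-parameter-subgroup curves). The paper then dominates $C_m$ by a fixed $\epsilon$, works with the $\epsilon$-perturbed Lagrangian $\Lambda^*_\epsilon$, invokes compactness of the sublevel sets of $\gamma\mapsto\int_0^1\Lambda^*_\epsilon(\dot\gamma)\,\mathrm dt$ to show $\lim_{\epsilon\to0}I_\epsilon(g)=I_G(g)$ for each fixed $g$, and concludes. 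You instead use the exact infimal-convolution identity $\Lambda^*_m=\Lambda^*\,\square\,\iota_{\overline{B(0,C_m)}}$ to peel off the perturbation at the level of the near-minimizing configurations, push them through $\Psi_m$, and then run a compactness-plus-contradiction argument in $G$ using lower semicontinuity of $I_G$. Your route is more transparent about why the $C_m$-term disappears (it is literally a $C_m$-shift in the Legendre variable, and the Lipschitz bound from Proposition~\ref{prop:continuity_repeated_exponential} shows the corresponding shift of the endpoint is $O(C_m)$), and it also makes explicit the interchange of $\liminf_m$ and $\inf_{g\in F}$, which the paper handles somewhat implicitly via the pointwise limit $I_\epsilon(g)\to I_G(g)$. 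The price is that you take goodness (hence lower semicontinuity) of $I_G$ as input, which is the same Arzel\`a--Ascoli-type machinery the paper uses for $\Ii_\epsilon$, so the two approaches are roughly equal in total work but organize the compactness arguments differently.

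Two small points worth flagging. First, you should say a word about why the infimum defining $\Lambda_m^*(y)=\inf_{|w|\leq C_m}\Lambda^*(y-w)$ is attained when finite: this follows because $\Lambda^*$ is lower semicontinuous and the constraint set is compact, and finiteness at some $w$ is guaranteed by $\Lambda^*_m(y)<\infty$. Second, your application of Proposition~\ref{prop:continuity_repeated_exponential} requires $|x_i|,|y_i|\leq r/m$ for a fixed $r$ uniformly in $m$; the bound $|m_kx_i^{(k)}|\leq B+C_{m_k}$ (from $\operatorname{dom}\Lambda^*\subseteq\overline{B(0,B)}$) and $|z_i^{(k)}|\leq B$ does deliver this with, say, $r=B+1$ once $m_k$ is large, so the constant $C$ in your Lipschitz estimate is indeed uniform in $k$; it is worth stating this explicitly so the reader sees that $d(\tilde g_k,g_k)\leq C\cdot C_{m_k}\to0$ does not secretly degrade with $m_k$.
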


\begin{proof}
Let $F \subset G$ be closed. Choose $m \in \NN$ large enough so that the random vector 
$$
\left(Y_{\lfloor m^{-1}n\rfloor}^{n,m,1},\ldots,Y_{\lfloor m^{-1}n\rfloor}^{n,m,m}\right) \in \fg^m
$$
defined in \eqref{eq:random_walk_vector} is well-defined. Let $\Psi_m:\fg^m \to G$ be the map given by
$$
\Psi_m(x_1,\ldots,x_m) = \exp(x_1)\cdots\exp(x_m).
$$
Because $\Psi_m$ is a composition of continuous functions, it is itself continuous. Furthermore, observe that by construction
$$
\Psi_m\left(Y_{\lfloor m^{-1}n\rfloor}^{n,m,1},\ldots,Y_{\lfloor m^{-1}n\rfloor}^{n,m,m}\right) = \sigma_n^n.
$$
Consequently, we have
$$
\PP\left(\sigma_n^n \in F\right) \leq \PP\left(\left(Y_{\lfloor m^{-1}n\rfloor}^{n,m,1},\ldots,Y_{\lfloor m^{-1}n\rfloor}^{n,m,m}\right) \in \Psi_m^{-1}F\right),
$$
where $\Psi_m^{-1}F$ is closed, because $F$ is closed and $\Psi_m$ is continuous. By Proposition \ref{prop:upper_bound_algebra} we then find that
\begin{align*}
&\limsup_{n\to\infty} \frac1n\log\PP\left(\sigma_n^n \in F\right)
\\
&\leq
-\inf_{(x_1,\ldots,x_m) \in \Psi_m^{-1}F} \frac1m\sum_{i=1}^m \sup_{\lambda \in \fg} \left\{\inp{\lambda}{mx_i} - \Lambda(\lambda) - C_m|\lambda|\right\},
\end{align*}
where $\lim_{m\to\infty} C_m = 0$. 

The final step is now to let $m$ tend to infinity, and show that we obtain the desired upper bound. For this, we need to show that
$$
-\lim_{m\to\infty} \inf_{(x_1,\ldots,x_m) \in \Psi_m^{-1}F} \frac1m\sum_{i=1}^m \sup_{\lambda \in \fg} \left\{\inp{\lambda}{mx_i} - \Lambda(\lambda) - C_m|\lambda|\right\} \leq -\inf_{g \in F} I_G(g).
$$

To this end, let $\epsilon > 0$ be arbitrary. Because $\lim_{m\to\infty} C_m = 0$, we can find $m_0 \in \NN$ such that $m \geq m_0$ implies that $C_m < \epsilon$. In that case, we have
\begin{align*}
&-\inf_{(x_1,\ldots,x_m) \in \Psi_m^{-1}F} \frac1m\sum_{i=1}^m \sup_{\lambda \in \fg} \left\{\inp{\lambda}{mx_i} - \Lambda(\lambda) - C_m|\lambda|\right\} 
\\
&\leq
-\inf_{(x_1,\ldots,x_m) \in \Psi_m^{-1}F} \frac1m\sum_{i=1}^m \sup_{\lambda \in \fg} \left\{\inp{\lambda}{mx_i} - \Lambda(\lambda) - \epsilon|\lambda|\right\}
\\
&=
-\inf_{(x_1,\ldots,x_m) \in \Psi_m^{-1}F} \frac1m\sum_{i=1}^m \Lambda_\epsilon^*(mx_i),
\end{align*}
where $\Lambda_\epsilon(\lambda) = \Lambda(\lambda) + \epsilon|\lambda|$ and $\Lambda_\epsilon^*$ denotes its Legendre transform. 

Now note that 
$$
\frac1m\sum_{i=1}^m \Lambda_\epsilon^*(mx_i) = \int_0^1 \Lambda_\epsilon^*(\dot\gamma(t))\ud t,
$$
where $\gamma:[0,1] \to G$ is given by $\gamma(0) = e$ and
$$
\gamma(t) = \gamma\left(\frac{i-1}m\right)\exp\left(\left(t - \frac{i-1}{m}\right)mx_i\right), \qquad t \in \left[\frac{i-1}{m},\frac im\right],
$$
for $i = 1,\ldots,m$. Furthermore, note that $\gamma(1) = \Psi_m(x_1,\ldots,x_m)$. 

Using this, we find that
\begin{align*}
&-\inf_{(x_1,\ldots,x_m) \in \Psi_m^{-1}F} \frac1m\sum_{i=1}^m \Lambda_\epsilon^*(mx_i)
\\
&\leq
-\inf\left\{\int_0^1 \Lambda_\epsilon^*(\dot\gamma(t))\ud t| \gamma:[0,1] \to G, \gamma(0) = e, \gamma(1) = g, \gamma \in AC\right\}.
\end{align*}

It remains to consider the limit $\epsilon \to 0$. To this end, first suppose that $I_G(g) < \infty$. By the goodness of the ratefunction $\Ii_\epsilon(\gamma) = \int_0^1 \Lambda_\epsilon^*(\dot\gamma(t))\ud t$, the sets
$$
C_\epsilon := \left\{ \gamma \middle | \int_0^1 \Lambda_\epsilon^*(\dot\gamma(t))\ud t \leq 2I_G(g)\right\}
$$
are compact. Furthermore, we have $C_{\epsilon'} \subset C_\epsilon$ whenever $\epsilon' \leq \epsilon$. Because lower-semicontinuous functions attain their minimum on compact sets, we have a sequence $\gamma_\epsilon$ such that 
\begin{align*}
&\int_0^1 \Lambda_\epsilon^*(\dot\gamma_\epsilon(t))\ud t
\\
&= \inf\left\{\int_0^1 \Lambda_\epsilon^*(\dot\gamma(t))\ud t| \gamma:[0,1] \to G, \gamma(0) = e, \gamma(1) = g, \gamma \in AC\right\}
\\
&=: I_\epsilon.
\end{align*}
Because the sequence $C_\epsilon$ is decreasing, for $\epsilon$ small enough, the sequence $\gamma_\epsilon$ is contained in a compact set, and hence, upon passing to subsequences, we may assume that $\gamma_\epsilon$ converges with limit $\gamma$. But then we find for every $\delta > 0$ that
\begin{align*}
\liminf_{\epsilon \to 0} I_\epsilon
&=
\liminf_{\epsilon \to 0} \int_0^1 \Lambda_\epsilon^*(\dot\gamma_\epsilon(t))\ud t 
\\
&\geq 
\liminf_{\epsilon \to 0} \int_0^1 \Lambda_\delta^*(\dot\gamma_\epsilon(t))\ud t
\\
&\geq
\int_0^1 \Lambda_\delta^*(\dot\gamma(t))\ud t.
\end{align*}
As this holds for all $\delta > 0$, by taking the limit $\delta \to 0$ we find that
$$
\liminf_{\epsilon \to 0} I_\epsilon \geq \int_0^1 \Lambda^*(\dot\gamma(t))\ud t \geq I_G(g).
$$
Because also $I_\epsilon \leq I_G(g)$ for every $\epsilon > 0$, we find that $\lim_{\epsilon \to 0} I_\epsilon = I_G(g)$ as desired. 

Now consider the case that $I_G(g) = \infty$. Suppose that $I_\epsilon$ does not converge to $\infty$. Then $\liminf_{\epsilon \to 0} I_\epsilon < \infty$. Upon passing to subsequences, suppose that $\lim_{\epsilon \to 0} I_\epsilon = I$. Following a similar reasoning as above, we find a sequence $\gamma_\epsilon$ converging to $\gamma$ which we can use to show that
$$
I_G(g) \leq \liminf_{\epsilon \to 0} I_\epsilon < \infty,
$$
which is a contradiction. Consequently, we find that $\lim_{\epsilon \to 0} I_\epsilon = \infty$.

Collecting everything, we have that
$$
\lim_{\epsilon \to 0} \left[\inf\left\{\int_0^1 \Lambda_\epsilon^*(\dot\gamma(t))\ud t| \gamma:[0,1] \to G, \gamma(0) = e, \gamma(1) = g, \gamma \in AC\right\}\right] = I_G(g).
$$
so that
$$
\limsup_{n\to\infty} \frac1n\log\PP(\sigma_n^n \in F) \leq -\inf_{g\in F} I_G(g),
$$
as desired.
\end{proof}

\subsection{Proof of the lower bound for the large deviation principle of $\{\sigma_n^n\}_{n\geq0}$}

In this section we prove the lower bound for the large deviation principle of $\{\sigma_n^n\}_{n\geq0}$. Before we can do this, we first need to study more carefully the continuity properties of the maps $\Psi_m:\fg^m \to G$ given, as in the proof of Proposition \ref{prop:upper_bound_LDP}, by
$$
\Psi_m(x_1,\ldots,x_m) = \exp(x_1)\cdots\exp(x_m).
$$

We have the following lemma.


\begin{lemma}\label{lemma:bounded_conjugation}
Let $K \subset G$ be compact. Denote by $\Ad_g:\fg \to \fg$ conjugation by $g$, i.e., $\Ad_gX = gXg^{-1}$. Then 
$$
\sup_{g\in K} ||\Ad_g|| < \infty.
$$ 
\end{lemma}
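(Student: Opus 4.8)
The plan is to run exactly the argument the paper already used for $\|\ad_X\|$ (the remark establishing $\lim_{X\to0}\|\ad_X\| = 0$): exhibit $g \mapsto \|\Ad_g\|$ as a continuous real-valued function on $G$ and then invoke compactness of $K$. The only point requiring a word of justification is the continuity of $g \mapsto \Ad_g$, which comes for free from the smoothness of the group operations.

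Concretely, I would first recall the intrinsic meaning of $\Ad_g$ for a general Lie group. For $g \in G$ let $c_g : G \to G$ be conjugation, $c_g(h) = g h g^{-1}$; since $c_g(e) = e$, its differential at the identity is a linear map $\Ad_g := \dd c_g(e) : \fg \to \fg$ (for a matrix Lie group this is literally $X \mapsto gXg^{-1}$, as in the statement). The map $G \times G \to G$, $(g,h) \mapsto ghg^{-1}$, is smooth because multiplication and inversion on $G$ are smooth; taking the derivative in the second variable at $h = e$ therefore yields a smooth map $\Ad : G \to \mathrm{End}(\fg)$, $g \mapsto \Ad_g$. In particular $g \mapsto \Ad_g$ is continuous. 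Since $\fg$ is finite-dimensional, $\mathrm{End}(\fg)$ is a finite-dimensional normed vector space and the operator norm $\|\cdot\| : \mathrm{End}(\fg) \to [0,\infty)$ is continuous; composing, $g \mapsto \|\Ad_g\|$ is continuous on $G$. A continuous real-valued function attains its supremum on the compact set $K$, so
$$
\sup_{g \in K} \|\Ad_g\| = \max_{g \in K} \|\Ad_g\| < \infty ,
$$
which is the assertion.

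There is no real obstacle here; the proof is a verbatim analogue of the continuity-plus-compactness observation made earlier for $\ad$. The only thing to be careful about is to phrase the definition of $\Ad_g$ intrinsically (as $\dd c_g(e)$) so that the smoothness of $g \mapsto \Ad_g$ is manifestly inherited from the smoothness of the group operations, rather than relying on the matrix formula $gXg^{-1}$, which only makes sense for matrix groups.
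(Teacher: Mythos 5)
Your proof is correct, but it takes a different route from the one in the paper. You show directly that the operator-valued map $g \mapsto \Ad_g$ is smooth (hence continuous) into the finite-dimensional normed space $\mathrm{End}(\fg)$, so $g \mapsto \|\Ad_g\|$ is a continuous real function on $G$ and attains its maximum on the compact set $K$. The paper instead argues pointwise: for each fixed $X \in \fg$ the map $g \mapsto \Ad_g X$ is continuous, hence $\sup_{g\in K}\|\Ad_g X\| < \infty$ for every $X$, and then it invokes the uniform boundedness principle to upgrade the pointwise bound to a bound on the operator norms. Both arguments are valid. Yours is arguably the cleaner one here: in a finite-dimensional setting there is no need to reach for Banach--Steinhaus, and your version is the verbatim analogue of the continuity-plus-compactness remark the paper already made for $\ad$. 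The paper's version has the minor virtue of requiring only continuity of $g \mapsto \Ad_g X$ for each fixed $X$ (a slightly weaker hypothesis), but pays for it with a heavier tool. Your observation that $\Ad_g$ should be defined intrinsically as $\dd c_g(e)$ (so that the smoothness of $g\mapsto\Ad_g$ is manifestly inherited from the smoothness of the group operations, rather than depending on the matrix formula $gXg^{-1}$) is a genuine improvement in precision over the statement of the lemma.
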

\begin{proof}
For every $X \in \fg$, the map $g \mapsto \Ad_gX$ is continuous, and hence bounded on compact sets. The claim then follows from the uniform boundedness theorem.
\end{proof}

We can now prove the following continuity property of the maps $\Psi_m$.

\begin{proposition}\label{prop:continuity_repeated_exponential}
For every $r > 0$, there exists a constant $C > 0$ such that for all $\epsilon > 0$ and $m \in \NN$ large enough we have that if
$$
(x_1,\ldots,x_m) \in B(y_1,C^{-1}\epsilon) \times \cdots B(y_1,C^{-1}\epsilon),
$$
then
$$
\Psi_m(x_1,\ldots,x_m) \in B(\Psi_m(y_1,\ldots,y_m),\epsilon) 
$$
whenever $|x_i|,|y_i| \leq \frac rm$.
\end{proposition}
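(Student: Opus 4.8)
The plan is to prove the sharper Lipschitz-type estimate
$$
d\bigl(\Psi_m(x_1,\ldots,x_m),\Psi_m(y_1,\ldots,y_m)\bigr) \le C\sum_{i=1}^m |x_i - y_i|,
$$
valid whenever $|x_i|,|y_i| \le r/m$ and $m$ is large enough, with a constant $C$ depending only on $r$; the proposition then follows immediately upon choosing the radii of the balls (they must scale like $\epsilon/m$, as in the sketch of Section~\ref{section:sketch}). To obtain the estimate I would compare the two products by a telescoping argument. Setting $g^{(k)} = \exp(y_1)\cdots\exp(y_k)\exp(x_{k+1})\cdots\exp(x_m)$ for $k = 0,\ldots,m$, so that $g^{(0)} = \Psi_m(x_1,\ldots,x_m)$ and $g^{(m)} = \Psi_m(y_1,\ldots,y_m)$, the triangle inequality gives $d(\Psi_m(x),\Psi_m(y)) \le \sum_{k=1}^m d(g^{(k-1)},g^{(k)})$. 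The consecutive terms $g^{(k-1)}$ and $g^{(k)}$ differ only in the $k$-th factor, and stripping the common left prefix $\exp(y_1)\cdots\exp(y_{k-1})$ using left-invariance of $d$ (see \eqref{eq:left_invariance_distance}) reduces the $k$-th summand to $d\bigl(\exp(x_k)h_k,\exp(y_k)h_k\bigr)$, where $h_k = \exp(x_{k+1})\cdots\exp(x_m)$.

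The crucial step is to move the common tail $h_k$ from the right to the left by conjugation. From the identity $h_k^{-1}\exp(X)h_k = \exp(\Ad_{h_k^{-1}}X)$ we get $\exp(x_k)h_k = h_k\exp(\Ad_{h_k^{-1}}x_k)$, so stripping $h_k$ on the left by left-invariance again gives
$$
d\bigl(\exp(x_k)h_k,\exp(y_k)h_k\bigr) = d\bigl(\exp(\Ad_{h_k^{-1}}x_k),\exp(\Ad_{h_k^{-1}}y_k)\bigr).
$$
To control the conjugation, note that since $|x_j| \le r/m$, the argument behind \eqref{eq:triangle_distance} yields $d(e,h_k) \le \sum_{j=k+1}^m |x_j| \le r$, hence also $d(e,h_k^{-1}) = d(e,h_k) \le r$; Lemma \ref{lemma:bounded_conjugation} then provides a constant $M_r := \sup_{g\in\overline{B(e,r)}}\|\Ad_g\| < \infty$, depending only on $r$, with $\|\Ad_{h_k^{-1}}\| \le M_r$. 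In particular $|\Ad_{h_k^{-1}}x_k|,|\Ad_{h_k^{-1}}y_k| \le M_r r/m$, which is below the radius of Proposition \ref{prop:log_Lipschitz} once $m$ is large enough (depending only on $r$). Writing $A = \Ad_{h_k^{-1}}x_k$ and $B = \Ad_{h_k^{-1}}y_k$, left-invariance together with the fact that $t\mapsto\exp(tW)$ has length $|W|$ gives $d(\exp A,\exp B) = d(e,\exp(-A)\exp(B)) \le |\log(\exp(-A)\exp(B))|$, and Proposition \ref{prop:log_Lipschitz} bounds the right-hand side by $C_1|A - B| \le C_1 M_r|x_k - y_k|$. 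Summing over $k$ produces the Lipschitz estimate with $C = C_1 M_r$, and hence the proposition.

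I expect the main obstacle to be the conjugation manoeuvre together with checking that every point that appears — the tails $h_k$ and their inverses, and the vectors $\Ad_{h_k^{-1}}x_k$, $\Ad_{h_k^{-1}}y_k$ — stays in the fixed compact ball $\overline{B(e,r)}$ (respectively within the radius of Proposition \ref{prop:log_Lipschitz}), so that the constants $M_r$ and $C_1$ can be chosen independently of $m$. Once this uniformity is secured, everything else is just the triangle inequality and left-invariance of the metric.
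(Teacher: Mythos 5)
Your proof is correct and follows essentially the same route as the paper: a telescoping comparison of $\Psi_m(x)$ and $\Psi_m(y)$ one factor at a time, stripping the common prefix by left-invariance, moving the remaining tail across by the conjugation identity $\exp(\Ad_g X) = g\exp(X)g^{-1}$, and controlling the two ingredients with Lemma~\ref{lemma:bounded_conjugation} and Proposition~\ref{prop:log_Lipschitz}. The only cosmetic differences are that the paper organizes the telescope recursively by peeling off the first factor and conjugates by the $y$-tail, whereas you write the interpolating chain $g^{(k)}$ explicitly and conjugate by the $x$-tail $h_k$ (applying Proposition~\ref{prop:log_Lipschitz} after conjugating rather than before); both choices are equivalent and the constants work out the same.
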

\begin{proof}
By the triangle inequality, we have
\begin{align*}
&d(\Psi_m(x_1,\ldots,x_m),\Psi_m(y_1,\ldots,y_m))
\\
&=
d(\Psi_{m-1}(x_2,\ldots,x_m),\exp(-x_1)\exp(y_1)\Psi_{m-1}(y_2,\ldots,y_m))
\\
&\leq
d(\Psi_{m-1}(x_2,\ldots,x_m),\Psi_{m-1}(y_2,\ldots,y_m))
\\
&\qquad +
d(\Psi_{m-1}(y_2,\ldots,y_m),\exp(-x_1)\exp(y_1)\Psi_{m-1}(y_2,\ldots,y_m))
\\
&=
d(\Psi_{m-1}(x_2,\ldots,x_m),\Psi_{m-1}(y_2,\ldots,y_m))
\\
&\qquad +
d(e,\Psi_{m-1}(y_2,\ldots,y_m)^{-1}\exp(-x_1)\exp(y_1)\Psi_{m-1}(y_2,\ldots,y_m)).
\end{align*}
Here we used in the second and fourth line that the metric is left-invariant, while we used the triangle inequality in the third line. 

Now notice that if $m$ is large enough, then for $x_1,y_1$ with $|x_1|,|y_1| \leq \frac rm$, we have that 
$$
\log(\exp(-x_1)\exp(y_1))
$$ 
is well-defined. Furthermore, by Proposition \ref{prop:log_Lipschitz} there exists a constant $C$ such that
$$
|\log(\exp(-x_1)\exp(y_1))| \leq C|x_1 - y_1|.
$$

Now notice that 
\begin{align*}
&\exp\left(\Psi_{m-1}(y_2,\ldots,y_m)^{-1}\log(\exp(-x_1)\exp(y_1))\Psi_{m-1}(y_2,\ldots,y_m)\right)
\\
&=
\Psi_{m-1}(y_2,\ldots,y_m)^{-1}\exp(\log(\exp(-x_1)\exp(y_1)))\Psi_{m-1}(y_2,\ldots,y_m)
\\
&=
\Psi_{m-1}(y_2,\ldots,y_m)^{-1}\exp(-x_1)\exp(y_1)\Psi_{m-1}(y_2,\ldots,y_m).
\end{align*}
Here, we used the property that if $g \in G$ and $X \in \fg$, then $\exp(gXg^{-1}) = g\exp(X)g^{-1}$. 

Consequently, we find that
\begin{align*}
&d(e,\Psi_{m-1}(y_2,\ldots,y_m)^{-1}\exp(-x_1)\exp(y_1)\Psi_{m-1}(y_2,\ldots,y_m))
\\
&\leq
\left|\Psi_{m-1}(y_2,\ldots,y_m)^{-1}\log(\exp(-x_1)\exp(y_1))\Psi_{m-1}(y_2,\ldots,y_m)\right|
\end{align*}

Because $y_2,\ldots,y_m \in B(0,rm^{-1})$, in the same way as we obtained \eqref{eq:triangle_distance}, we find that
$$
|\Psi_{m-1}(y_2,\ldots,y_m)| \leq B\sum_{i=2}^m |y_i| \leq Br.
$$
Now, because Lie groups are complete as Riemannian manifold, the set $\overline{B(e,Br)} \subset G$ is compact. Combining everything and applying Lemma \ref{lemma:bounded_conjugation}, there exists a constant $C > 0$ such that
\begin{align*}
&\left|\Psi_{m-1}(y_2,\ldots,y_m)^{-1}\log(\exp(-x_1)\exp(y_1))\Psi_{m-1}(y_2,\ldots,y_m)\right|
\\
&\leq 
C|\log(\exp(-x_1)\exp(y_1))|.
\end{align*}

By Proposition \ref{prop:log_Lipschitz} there exists a (possibly different) constant $C > 0$ such that
$$
|\log(\exp(-x_1)\exp(y_1))| \leq C|x_1 - y_1|.
$$
Consequently, we get that there exists a constant $C > 0$ such that
$$
d(e,\Psi_{m-1}(y_2,\ldots,y_m)^{-1}\exp(-x_1)\exp(y_1)\Psi_{m-1}(y_2,\ldots,y_m)) \leq C|x_1 - y_1|,
$$
and hence,
\begin{align*}
&d(\Psi_m(x_1,\ldots,x_m),\Psi_m(y_1,\ldots,y_m))
\\
&\leq
d(\Psi_{m-1}(x_1,\ldots,x_m),\Psi_{m-1}(y_1,\ldots,y_m)) + C|x_1 - y_1|.
\end{align*}
Iterating this procedure, we find that
$$
d(\Psi_m(x_1,\ldots,x_m),\Psi_m(y_1,\ldots,y_m)) \leq C\sum_{i=1}^m |x_i - y_i|.
$$

It thus follows that if 
$$
(x_1,\ldots,x_m) \in B(y_1,(Cm)^{-1}\epsilon) \times \cdots \times B(y_m,(Cm)^{-1}\epsilon),
$$
then
$$
d(\Psi_m(x_1,\ldots,x_m),\Psi_m(y_1,\ldots,y_m)) < C\sum_{i=1}^m (Cm)^{-1}\epsilon = \epsilon,
$$
which proves the claim.

\end{proof}

We need one more result, which allows us to cut up a curve $\gamma \in \Aa\Cc$ in an appropriate way.

\begin{proposition}\label{prop:fundamental_theorem}
Let $\gamma \in \Aa\Cc([0,1];G)$ be arbitrary. Assume that $\dot\gamma \in L^\infty([0,1],\fg)$. Then for each $m$ large enough, the vectors 
$$
\log\left(\gamma\left(\frac {i-1}m\right)^{-1}\gamma\left(\frac im\right)\right) \in \fg
$$ 
are well-defined for $i = 1,\ldots, m$. Furthermore, there exist constants $L_m$ with $\lim_{m\to\infty} L_m = 0$ such that
$$
\left|\log\left(\gamma\left(\frac {i-1}m\right)^{-1}\gamma\left(\frac im\right)\right) - \int_{\frac{i-1}{m}}^{\frac im} \dot\gamma(t)\ud t\right| \leq L_m\frac1m||\dot\gamma||_\infty.
$$
\end{proposition}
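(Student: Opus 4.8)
The plan is to dispose of well-definedness first, then pass to the differential (infinitesimal) form of the Baker--Campbell--Hausdorff formula and estimate the resulting integrand.

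\textbf{Step 1 (well-definedness).} Fix $i$ and abbreviate $a=(i-1)/m$, $b=i/m$. Since $\gamma$ is absolutely continuous, the arclength bound gives, for every $t\in[a,b]$, $d(\gamma(a),\gamma(t))\le\int_a^t|\dot\gamma(s)|\ud s\le\frac1m\|\dot\gamma\|_\infty$, and by left-invariance \eqref{eq:left_invariance_distance} this equals $d(e,\gamma(a)^{-1}\gamma(t))$. The right-hand side tends to $0$ as $m\to\infty$, uniformly in $i$ and $t$, so Proposition \ref{prop:injectivity} yields constants $\rho_m\to0$ such that, for $m$ large and all $i$, $\log(\gamma(a)^{-1}\gamma(t))$ is defined on $[a,b]$ and bounded in norm by $\rho_m$.

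\textbf{Step 2 (reduce to a curve through $e$ and differentiate $\log$).} Set $c(t):=\gamma(a)^{-1}\gamma(t)$, so $c(a)=e$. Since left translation is an isometry and the identification $T_gG\cong\fg$ is left-invariant, the $\fg$-representative of $\dot c(t)$ coincides with that of $\dot\gamma(t)$; in particular $|\dot c(t)|=|\dot\gamma(t)|$ and $\int_a^b\dot c(t)\ud t=\int_{(i-1)/m}^{i/m}\dot\gamma(t)\ud t$ in $\fg$. Put $u:=\log\circ c$; by Step 1 this lies in $\Aa\Cc([a,b];\fg)$ with $u(a)=0$ and $|u(t)|\le\rho_m$. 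Differentiating $c=\exp\circ u$ and inverting the differential of $\exp$ — this is precisely the infinitesimal Baker--Campbell--Hausdorff identity underlying Theorem \ref{theorem:BCH}, cf.\ \eqref{eq:series_inverse} — gives for a.e.\ $t$
$$
\dot u(t)=\frac{\ad_{u(t)}}{I-e^{-\ad_{u(t)}}}\,\dot c(t)=g\!\left(e^{\ad_{u(t)}}\right)\dot c(t),
$$
where $\dot c(t)$ is read as an element of $\fg$. Integrating and using $u(a)=0$ then yields $\log(\gamma(a)^{-1}\gamma(b))=\int_a^b g(e^{\ad_{u(t)}})\dot c(t)\ud t$.

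\textbf{Step 3 (estimate the integrand).} Subtracting $\int_a^b\dot c(t)\ud t$ and using $g(z)-1=\sum_{k\ge1}\frac{(-1)^{k+1}}{k(k+1)}(z-1)^k$,
$$
\left|\log\!\left(\gamma(a)^{-1}\gamma(b)\right)-\int_{(i-1)/m}^{i/m}\dot\gamma(t)\ud t\right|\le\int_a^b\left\|g\!\left(e^{\ad_{u(t)}}\right)-I\right\|\,|\dot\gamma(t)|\ud t.
$$
Since $|u(t)|\le\rho_m$, \eqref{eq:small_adjoint} gives $\|\ad_{u(t)}\|\le\kappa_m$ with $\kappa_m:=\sup_{|X|\le\rho_m}\|\ad_X\|\to0$, hence $\|e^{\ad_{u(t)}}-I\|\le e^{\kappa_m}-1=:\delta_m\to0$, and for $m$ large enough that $\delta_m<1$,
$$
\left\|g\!\left(e^{\ad_{u(t)}}\right)-I\right\|\le\sum_{k\ge1}\frac{\delta_m^{\,k}}{k(k+1)}\le\delta_m.
$$
Combined with $\int_a^b|\dot\gamma(t)|\ud t\le\frac1m\|\dot\gamma\|_\infty$ this yields the stated bound with $L_m:=\delta_m=e^{\kappa_m}-1$, which is independent of $i$ and satisfies $\lim_{m\to\infty}L_m=0$.

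\textbf{Main obstacle.} The distance/arclength estimates and the scalar series bound are routine; the one step that needs care is the integral formula $\log(\gamma(a)^{-1}\gamma(b))=\int_a^b g(e^{\ad_{u(t)}})\dot c(t)\ud t$. Justifying it requires (i) the chain rule for the absolutely continuous curve $c$ composed with the smooth map $\log$, which is legitimate because $c$ stays in a neighbourhood of $e$ on which $\log$ is a diffeomorphism, and (ii) careful bookkeeping with the left-invariant identification $T_gG\cong\fg$, so that left translation by $\gamma(a)^{-1}$ changes neither the $\fg$-representative of the velocity nor the integral $\int\dot\gamma\ud t$.
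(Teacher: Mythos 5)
Your proof is correct and follows essentially the same route as the paper: both express $\log\big(\gamma(\tfrac{i-1}{m})^{-1}\gamma(\tfrac{i}{m})\big)$ by integrating the derivative of the curve $t\mapsto \log\big(\gamma(\tfrac{i-1}{m})^{-1}\gamma(t)\big)$, identify that derivative with $g(e^{\ad})$ applied to the $\fg$-representative of $\dot\gamma$, and then estimate $\|g(e^{\ad_{u(t)}})-I\|$ via the power series of $g$ at $1$ together with the uniform smallness of $\|\ad_{u(t)}\|$. The only cosmetic differences are that the paper obtains the uniform smallness of $|\log(\cdot)|$ from uniform continuity of $\gamma$ rather than from the arclength bound $\frac1m\|\dot\gamma\|_\infty$, and you close the series bound to the explicit constant $L_m=e^{\kappa_m}-1$, whereas the paper leaves $L_m$ implicit.
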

\begin{proof}
First of all, because $\gamma$ is continuous and $[0,1]$ is compact, it is actually uniformly continuous. Consequently, we can take $m \in \NN$ large enough, so that for $i = 1,\ldots,m$ the vectors 
$$
\log\left(\gamma\left(\frac {i-1}m\right)^{-1}\gamma\left(\frac im\right)\right) \in \fg
$$ 
are well-defined.

Now consider the function $f:[\frac{i-1}{m},\frac im] \to \fg$ given by
$$
f(r) = \log\left(\gamma\left(\frac {i-1}m\right)^{-1}\gamma(r)\right).
$$
Then 
$$
f'(r) = \dd\log\left(\gamma\left(\frac {i-1}m\right)^{-1}\gamma(r)\right)(\dot\gamma(r)),
$$
where again we used the identification of $T_{\gamma(r)}G$ with $\fg$. Consequently, we have
\begin{align*}
\log\left(\gamma\left(\frac {i-1}m\right)^{-1}\gamma\left(\frac im\right)\right)
&= 
f\left(\frac{i}{m}\right) - f\left(\frac{i-1}{m}\right)
\\
&= 
\int_{\frac{i-1}{m}}^{\frac im} f'(r)\ud r
\\
&=  
\int_{\frac{i-1}{m}}^{\frac im} \dd\log\left(\gamma\left(\frac {i-1}m\right)^{-1}\gamma(r)\right)(\dot\gamma(r)) \ud r.
\end{align*}

With this expression at hand, we can estimate
\begin{align}
&\left|\log\left(\gamma\left(\frac {i-1}m\right)^{-1}\gamma\left(\frac im\right)\right) - \int_{\frac{i-1}{m}}^{\frac im} \dot\gamma(r)\ud t\right|
\\
&\leq
\int_{\frac{i-1}{m}}^{\frac im} \left|\left|\dd\log\left(\gamma\left(\frac {i-1}m\right)^{-1}\gamma(r)\right) - I\right|\right||\dot\gamma(r)| \ud r \label{eq:straigt_line}
\\
&\leq
||\dot\gamma||_\infty \int_{\frac{i-1}{m}}^{\frac im}\left|\left|\dd\log\left(\gamma\left(\frac {i-1}m\right)^{-1}\gamma(r)\right) - I\right|\right| \ud r
\end{align}

Now it follows from \eqref{eq:series_inverse} that (see also \cite[Chapter 5]{Hal15} or \cite[Chapter 2]{Vaj84})
$$
\dd\log\left(\gamma\left(\frac {i-1}m\right)^{-1}\gamma(r)\right) - I = \sum_{k=1}^\infty \frac{(-1)^{k+1}}{k(k+1)}\left(e^{\ad_{\log\left(\gamma\left(\frac {i-1}m\right)^{-1}\gamma(r)\right)}} - I\right)^k.
$$
From this it follows that
\begin{equation} \label{eq:estimate_dlog}
\left|\left|\dd\log\left(\gamma\left(\frac {i-1}m\right)^{-1}\gamma(r)\right) - I\right|\right| \leq \sum_{k=1}^\infty \frac{1}{k(k+1)}\left(e^{||\ad_{\log\left(\gamma\left(\frac {i-1}m\right)^{-1}\gamma(r)\right)}||} - 1\right)^k.
\end{equation}
Because $\gamma$ is uniformly continuous on $[0,1]$, together with the continuity of the logarithm, we have that 
$$
\lim_{m\to\infty} \sup_{1\leq i \leq m}\sup_{r \in [\frac{i-1}{m},\frac im]} \left|\log\left(\gamma\left(\frac {i-1}m\right)^{-1}\gamma(r)\right)\right| =0.
$$
But then also
$$
\lim_{m\to\infty} \sup_{1\leq i \leq m}\sup_{r \in [\frac{i-1}{m},\frac im]} \left|\left|\ad_{\log\left(\gamma\left(\frac {i-1}m\right)^{-1}\gamma(r)\right)}\right|\right| = 0,
$$
so that the upper bound in \eqref{eq:estimate_dlog} tends to 0 if $m$ goes to infinity. We can thus find constants $L_m$ with $\lim_{m\to\infty} L_m = 0$ such that
$$
\left|\left|\dd\log\left(\gamma\left(\frac {i-1}m\right)^{-1}\gamma(r)\right) - I\right|\right| \leq L_m
$$
for all $i = 1,\ldots, m$ and all $r \in [\frac{i-1}{m},\frac im]$. If we plug this into \eqref{eq:straigt_line}, we find
$$
\left|\log\left(\gamma\left(\frac {i-1}m\right)^{-1}\gamma\left(\frac im\right)\right) - \int_{\frac{i-1}{m}}^{\frac im} \dot\gamma(r)\ud t\right| \leq ||\dot\gamma||_\infty\int_{\frac{i-1}{m}}^{\frac im} L_m \ud r = L_m\frac1m||\dot\gamma||_\infty
$$
as desired.

\end{proof}

With the final preparations done, we can prove the lower bound of the large deviation principle for $\{\sigma_n^n\}_{n\geq1}$.

\begin{proposition}\label{prop:lower_bound_LDP}
Let the assumptions of Theorem \ref{theorem:Cramer_Lie} be satisfied. Then for every $U \subset G$ open we have
$$
\liminf_{n\to\infty} \frac1n\log\PP(\sigma_n^n \in U) \geq -\inf_{g\in U} I_G(g),
$$
where $I_G$ is the good rate function given by \eqref{eq:ratefunction}.
\end{proposition}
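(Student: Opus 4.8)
The plan is to follow the outline of Section~\ref{section:sketch}: split $\sigma_n^n$ into $m$ blocks, realise it as the continuous image $\Psi_m\big(Y^{n,m,1}_{\lfloor m^{-1}n\rfloor},\dots,Y^{n,m,m}_{\lfloor m^{-1}n\rfloor}\big)$ of a random vector in $\fg^m$ with i.i.d.\ components, and push the lower bound down to Cram\'er's theorem in the vector space $\fg$. As a first reduction it suffices to prove that for every open $U\subset G$, every $g\in U$, and every $\gamma\in\Aa\Cc([0,1];G)$ with $\gamma(0)=e$, $\gamma(1)=g$ and $\int_0^1\Lambda^*(\dot\gamma(t))\ud t<\infty$, one has
$$
\liminf_{n\to\infty}\frac1n\log\PP(\sigma_n^n\in U)\ \geq\ -\int_0^1\Lambda^*(\dot\gamma(t))\ud t ;
$$
taking the infimum over such $\gamma$ gives $-I_G(g)$ on the right, and then the supremum over $g\in U$ gives the desired $-\inf_{g\in U}I_G(g)$ (if no admissible $\gamma$ has finite energy then $I_G\equiv\infty$ on $U$ and there is nothing to prove). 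Before anything else I would record that boundedness of the increments, $|X_1|\le B$ a.s., gives $\Lambda(\lambda)\le|\lambda|B$, hence $\Lambda^*(X)=\infty$ whenever $|X|>B$; therefore finiteness of $\int_0^1\Lambda^*(\dot\gamma(t))\ud t$ forces $|\dot\gamma(t)|\le B$ for a.e.\ $t$, so that $\dot\gamma\in L^\infty([0,1];\fg)$ and Proposition~\ref{prop:fundamental_theorem} is applicable to $\gamma$.

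Next I would discretise. Fix $\epsilon>0$ with $B(g,\epsilon)\subset U$, and for $m\in\NN$ set $y_i^m=\log\big(\gamma(\tfrac{i-1}m)^{-1}\gamma(\tfrac im)\big)$ and $\tilde y_i^m=\int_{(i-1)/m}^{i/m}\dot\gamma(t)\ud t$ for $i=1,\dots,m$. For $m$ large these are well-defined, and a telescoping product gives $\Psi_m(y_1^m,\dots,y_m^m)=\gamma(0)^{-1}\gamma(1)=g$, where $\Psi_m(x_1,\dots,x_m)=\exp(x_1)\cdots\exp(x_m)$ as in Proposition~\ref{prop:continuity_repeated_exponential}. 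Proposition~\ref{prop:fundamental_theorem} yields $|y_i^m-\tilde y_i^m|\le L_m m^{-1}\|\dot\gamma\|_\infty$ with $L_m\to0$, while Proposition~\ref{prop:continuity_repeated_exponential} (applicable once the radius $r$ is fixed large enough, since $|\tilde y_i^m|\le m^{-1}B$ and hence $|y_i^m|\le 2m^{-1}B$ for $m$ large) provides a constant $C>0$, \emph{independent of $m$}, such that $(x_1,\dots,x_m)\in\prod_{i=1}^m B(y_i^m,(Cm)^{-1}\epsilon)$ implies $\Psi_m(x_1,\dots,x_m)\in B(g,\epsilon)$. Since $C^{-1}\epsilon$ is a fixed constant while $L_m\to0$, for all $m$ large enough $|y_i^m-\tilde y_i^m|<(2Cm)^{-1}\epsilon$, so $B(\tilde y_i^m,(2Cm)^{-1}\epsilon)\subset B(y_i^m,(Cm)^{-1}\epsilon)$; thus membership of $(x_1,\dots,x_m)$ in $\prod_{i=1}^m B(\tilde y_i^m,(2Cm)^{-1}\epsilon)$ still forces $\Psi_m(x_1,\dots,x_m)\in B(g,\epsilon)$.

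Now I would transfer to $\fg^m$. Using $\Psi_m\big(Y^{n,m,1}_{\lfloor m^{-1}n\rfloor},\dots,Y^{n,m,m}_{\lfloor m^{-1}n\rfloor}\big)=\sigma_n^n$ with i.i.d.\ components, and then \eqref{eq:replacement2} (i.e.\ Proposition~\ref{prop:replacement_sum} applied to each block, which for $m$ large gives $\big|Y^{n,m,j}_{\lfloor m^{-1}n\rfloor}-Z_j^{n,m}\big|\le C_m m^{-1}$ surely, with $Z_j^{n,m}=\frac1n\sum_{i=1}^{n_j-n_{j-1}}X_{n_{j-1}+i}$ and $C_m\to0$), one obtains for $m$ large (so also $C_m m^{-1}<(4Cm)^{-1}\epsilon$)
$$
\PP(\sigma_n^n\in U)\ \geq\ \PP\!\left((Z_1^{n,m},\dots,Z_m^{n,m})\in \prod_{i=1}^m B\big(\tilde y_i^m,(4Cm)^{-1}\epsilon\big)\right)\ =\ \prod_{j=1}^m\PP\!\left(Z_j^{n,m}\in B\big(\tilde y_j^m,(4Cm)^{-1}\epsilon\big)\right),
$$
the equality by independence of the blocks. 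Writing $Z_j^{n,m}=\tfrac pn\cdot\tfrac1p\sum_{i=1}^p X_{n_{j-1}+i}$ with $p=n_j-n_{j-1}$ and $\tfrac pn\to\tfrac1m$, Cram\'er's theorem in $\fg$ (its lower bound on an open ball, together with the index rescaling $p\leftrightarrow n$) gives
$$
\liminf_{n\to\infty}\frac1n\log\PP\!\left(Z_j^{n,m}\in B\big(\tilde y_j^m,(4Cm)^{-1}\epsilon\big)\right)\ \geq\ -\frac1m\Lambda^*(m\tilde y_j^m),
$$
and here $\Lambda^*(m\tilde y_j^m)<\infty$ since $m\tilde y_j^m=m\int_{(j-1)/m}^{j/m}\dot\gamma$ lies in the convex set $\{\Lambda^*<\infty\}$, with $\Lambda^*(m\tilde y_j^m)\le m\int_{(j-1)/m}^{j/m}\Lambda^*(\dot\gamma(t))\ud t$ by Jensen. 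Adding up the finitely many blocks and applying this Jensen estimate once more,
$$
\liminf_{n\to\infty}\frac1n\log\PP(\sigma_n^n\in U)\ \geq\ -\frac1m\sum_{j=1}^m\Lambda^*(m\tilde y_j^m)\ \geq\ -\int_0^1\Lambda^*(\dot\gamma(t))\ud t,
$$
which is the reduced claim; note that any single sufficiently large $m$ suffices, no limit $m\to\infty$ is needed.

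The main obstacle --- the only place the argument really goes beyond the Euclidean proof of Cram\'er's theorem --- is the twofold failure of the fundamental theorem of calculus on $G$: the discretised increments $y_i^m$ of the target curve differ from $\tilde y_i^m=\int\dot\gamma$, and the block vectors $Y^{n,m,j}$ of the random walk differ from the block averages $Z_j^{n,m}$ because of the $\Oo(1)$ Baker--Campbell--Hausdorff correction. The delicate point in making the nested inclusions of the second and third paragraphs valid is a matter of scales: the continuity modulus of $\Psi_m$ behaves like $(Cm)^{-1}$ with $C$ a \emph{fixed} constant, whereas both the discretisation error ($L_m m^{-1}$, Proposition~\ref{prop:fundamental_theorem}) and the replacement error ($C_m m^{-1}$, Proposition~\ref{prop:replacement_sum}) carry \emph{vanishing} prefactors $L_m,C_m\to0$; hence for $m$ large these two errors fit comfortably inside the $\Psi_m$-continuity ball, and the whole chain of inclusions goes through. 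The remaining work is the routine index bookkeeping $\lfloor m^{-1}n\rfloor\leftrightarrow n$ when invoking Cram\'er's theorem on each block, and the elementary observation that $\{\Lambda^*<\infty\}$ is bounded so that $\dot\gamma$ of a finite-energy curve is automatically in $L^\infty$.
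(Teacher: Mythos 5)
Your proposal matches the paper's proof of Proposition \ref{prop:lower_bound_LDP} essentially step for step: the reduction to a single finite-energy curve $\gamma$, the observation that boundedness of the increments forces $\dot\gamma\in L^\infty$, the discretisation into $y_i^m$ and $\tilde y_i^m$, the nested inclusions built from Propositions \ref{prop:continuity_repeated_exponential}, \ref{prop:fundamental_theorem} and \ref{prop:replacement_sum} (your $(4Cm)^{-1}\epsilon$ and the paper's $(2Cm)^{-1}\epsilon/2$ are the same radius), the factorisation over independent blocks, Cram\'er's lower bound on each block with the $\lfloor m^{-1}n\rfloor\leftrightarrow n$ rescaling, and the final Jensen estimate. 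The observation you single out --- that the errors $L_m m^{-1}$ and $C_m m^{-1}$ carry vanishing prefactors while the continuity modulus $(Cm)^{-1}$ of $\Psi_m$ has a fixed constant, so that a single large $m$ suffices --- is exactly the scale-separation the paper relies on.
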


\begin{proof}
Let $U \subset G$ be open. Fix $g \in U$ and a curve $\gamma \in \Aa\Cc([0,1];G)$ with $\gamma(0) = e$ and $\gamma(1) = g$. We will show that 
$$
\liminf_{n\to\infty} \frac1n\log\PP\left(\left(\frac1n*\Ss\right)_n \in U\right) \geq -\int_0^1 \Lambda^*(\dot\gamma(t))\ud t.
$$

If $\int_0^1 \Lambda^*(\dot\gamma(t))\ud t = \infty$, the above is certainly true. Hence, we assume that $\int_0^1 \Lambda^*(\dot\gamma(t))\ud t < \infty$. Because $\Lambda$ is the log-moment generating function of a bounded random variable, it follows $\Lambda^*$ is finite only on a bounded set, referred to as its domain. Consequently, because $\int_0^1 \Lambda^*(\dot\gamma(t))\ud t < \infty$, it must be that $\dot\gamma(t)$ is in the domain of $\Lambda^*$ for almost all $t$. But then we have that $||\dot\gamma||_\infty < \infty$.  

By the same reasoning as in the proof of Proposition \ref{prop:fundamental_theorem}, we can take $m \in \NN$ large enough, so that we can define for $i = 1,\ldots,m$ the vectors 
$$
y_i^m := \log\left(\gamma\left(\frac {i-1}m\right)^{-1}\gamma\left(\frac im\right)\right) \in \fg.
$$ 
Let $\Psi_m:\fg^m\to G$ be again the map given by 
$$
\Psi_m(x_1,\ldots,x_m) = \exp(x_1)\cdots\exp(x_m),
$$
so that $g = \Psi_m(y_1^m,\ldots,y_m^m)$. 

Because $U$ is open, there exists an $\epsilon > 0$ such that $B(g,\epsilon) \subset U$. By Proposition \ref{prop:continuity_repeated_exponential}, for $m$ large enough, there exists a constant $C > 0$ independent of $m$, such that if 
$$
(x_1,\ldots,x_m) \in B(y_1^m,(Cm)^{-1}\epsilon) \times \cdots \times B(y_m^m,(Cm)^{-1}\epsilon),
$$
then $\Psi_m(x_1,\ldots,x_m) \in B(g,\epsilon)$. 

Now define for $i = 1,\ldots,m$ the vectors
$$
\tilde y_i^m := \int_{\frac{i-1}{m}}^{\frac im} \dot\gamma(t)\ud t.
$$

By Proposition \ref{prop:fundamental_theorem}, for $m$ large enough there exists a constant $L_m$ such that for $i = 1,\ldots,m$ we have
$$
|y_i^m - \tilde y_i^m| \leq L_m\frac1m||\dot\gamma||_\infty
$$
and $\lim_{m\to\infty} L_m = 0$. But then we have for $m$ large enough that $B(\tilde y_i^m, (2Cm)^{-1}\epsilon) \subset B(y_i^m,(Cm)^{-1}\epsilon)$. Consequently, we have that if 
$$
(x_1,\ldots,x_m) \in B(\tilde y_1^m,(2Cm)^{-1}\epsilon) \times \cdots \times B(\tilde y_m^m,(2Cm)^{-1}\epsilon),
$$
then $\Psi_m(x_1,\ldots,x_m) \in B(g,\epsilon)$

Now, let $\left(Y_{\lfloor m^{-1}n\rfloor}^{n,m,1},\ldots,Y_{\lfloor m^{-1}n\rfloor}^{n,m,m}\right)$ be again as in \eqref{eq:random_walk_vector}, so that 
$$
\Psi_m\left(\left(Y_{\lfloor m^{-1}n\rfloor}^{n,m,1},\ldots,Y_{\lfloor m^{-1}n\rfloor}^{n,m,m}\right)\right) = \sigma_n^n.
$$

Using the above, we have
\begin{align*}
\PP\left(\sigma^n_n \in U\right) 
&\geq 
\PP\left(\left(Y_{\lfloor m^{-1}n\rfloor}^{n,m,1},\ldots,Y_{\lfloor m^{-1}n\rfloor}^{n,m,m}\right) \in B(\tilde y_1^m,(2Cm)^{-1}\epsilon) \times \cdots \times B(\tilde y_m^m,(2Cm)^{-1}\epsilon)\right)
\\
&=
\prod_{i=1}^m \PP\left(Y_{\lfloor m^{-1}n\rfloor}^{n,m,i} \in B(\tilde y_i^m,(2Cm)^{-1}\epsilon)\right)
\\
&=
\prod_{i=1}^m \PP\left(Y_{\lfloor m^{-1}n\rfloor}^{n,m,1} \in B(\tilde y_i^m,(2Cm)^{-1}\epsilon)\right)
\end{align*}
Here we used again the fact that $Y_{\lfloor m^{-1}n\rfloor}^{n,m,1},\ldots,Y_{\lfloor m^{-1}n\rfloor}^{n,m,m}$ are i.i.d., which follows from the fact that the sequence $\{X_n\}_{n\geq1}$ is i.i.d., together with expression \eqref{eq:partition}.

Continuing, it follows from Proposition \ref{prop:replacement_sum} that 
$$
\left|Y_{\lfloor m^{-1}n\rfloor}^{n,m,1} - \frac1n\sum_{j=1}^{\lfloor m^{-1}n\rfloor}X_j\right| \leq C_m\frac1m,
$$ 
where $\lim_{m\to\infty} C_m = 0$. Consequently, for $m$ large enough, we have
$$
\left|Y_{\lfloor m^{-1}n\rfloor}^{n,m,1} - \frac1n\sum_{j=1}^{\lfloor m^{-1}n\rfloor}X_j\right| \leq (2Cm)^{-1}\frac\epsilon2.
$$

In that case we have
$$
\PP\left(Y_{\lfloor m^{-1}n\rfloor}^{n,m,1} \in B(\tilde y_i^m,(2Cm)^{-1}\epsilon)\right) \geq \PP\left(\frac1n\sum_{j=1}^{\lfloor m^{-1}n\rfloor}X_j \in B(\tilde y_i^m,(2Cm)^{-1}\epsilon/2)\right)
$$

By Cram\'er's theorem for vector spaces, it follows that $\{\frac1n\sum_{j=1}^{\lfloor m^{-1}n\rfloor}X_j\}_{n\geq 0}$ satisfies the large deviation principle in $\fg$ with good rate function $I_m(x) = \frac1m\Lambda^*(mx)$. Hence, we find that
\begin{align*}
\liminf_{n\to\infty} \frac1n\log\PP\left(\sigma^n_n \in U\right)
&\geq
\sum_{i=1}^m \liminf_{n\to\infty} \frac1n \log \PP\left(Y_{\lfloor m^{-1}n\rfloor}^{n,m,1} \in B(\tilde y_i^m,(2Cm)^{-1}\epsilon)\right)
\\
&\geq
\sum_{i=1}^m \liminf_{n\to\infty} \frac1n \log \PP\left(\frac1n\sum_{j=1}^{\lfloor m^{-1}n\rfloor}X_j \in B(\tilde y_i^m,(2Cm)^{-1}\epsilon/2)\right)
\\
&\geq
\sum_{i=1}^m -I_m(\tilde y_i^m)
\\
&=
-\frac1m\sum_{i=1}^m \Lambda^*(m\tilde y_i^m).
\end{align*}

We are done once we show that
$$
\frac1m\sum_{i=1}^m \Lambda^*(m\tilde y_i^m) \leq \int_0^1 \Lambda^*(\dot\gamma(t)) \ud t.
$$

By the convexity of $\Lambda^*$ and Jensen's inequality, we have
$$
\Lambda^*(m\tilde y_i^m) = \Lambda^*\left(m\int_{\frac{i-1}{m}}^{\frac im} \dot\gamma(t)\ud t\right) \leq m\int_{\frac{i-1}{m}}^{\frac im} \Lambda^*(\dot\gamma(t))\ud t.
$$
From this it follows that
$$
\frac1m\sum_{i=1}^m \Lambda^*(m\tilde y_i^m) \leq \sum_{i=1}^m \int_{\frac{i-1}{m}}^{\frac im} \Lambda^*(\dot\gamma(t))\ud t = \int_0^1 \Lambda^*(\dot\gamma(t))\ud t,
$$
which concludes the proof.

\end{proof}

\smallskip

\textbf{Acknowledgment}
RV was supported by the Peter Paul Peterich Foundation via the TU Delft University Fund.

\printbibliography

\end{document}